\numberwithin{equation}{section}
\newtheorem{Theorem}{Theorem}[section]
\newtheorem{Corollary}[Theorem]{Corollary}
\newtheorem{Lemma}[Theorem]{Lemma}
\newtheorem{Proposition}[Theorem]{Proposition}
 { \theoremstyle{definition}
\newtheorem*{rems}{Remarks}
\newtheorem*{rem}{Remark} }
\newcommand{\rfac}[2]{{\left({#1}\right)_{#2}}}
\newcommand{\qrfac}[2]{{\left({#1}\right)_{#2}}}
\newcommand{\pqrfac}[3]{{\left({#1};#3\right)_{#2}}}
\newcommand{\elliptictheta}[1]{\theta\!\left({#1} \right) }
\newcommand{\ellipticthetap}[1]{\theta\!\left({#1} ; p\right) }
\newcommand{\ellipticqrfac}[2]{{\left({#1}; q, p\right)_{#2}}}
\begin{document}

\allowdisplaybreaks

\newcommand{\arXivNumber}{1802.09885}

\renewcommand{\thefootnote}{}

\renewcommand{\PaperNumber}{052}

\FirstPageHeading

\ShortArticleName{The Determinant of an Elliptic Sylvesteresque Matrix}

\ArticleName{The Determinant of an Elliptic Sylvesteresque Matrix\footnote{This paper is a~contribution to the Special Issue on Elliptic Hypergeometric Functions and Their Applications. The full collection is available at \href{https://www.emis.de/journals/SIGMA/EHF2017.html}{https://www.emis.de/journals/SIGMA/EHF2017.html}}}

\Author{Gaurav BHATNAGAR and Christian KRATTENTHALER}

\AuthorNameForHeading{G.~Bhatnagar and C. Krattenthaler}

\Address{Fakult\"at f\"ur Mathematik, Universit\"at Wien, Oskar-Morgenstern-Platz 1, 1090 Wien, Austria}
\Email{\href{mailto:bhatnagarg@gmail.com}{bhatnagarg@gmail.com}, \href{mailto:christian.krattenthaler@univie.ac.at}{christian.krattenthaler@univie.ac.at}}
\URLaddress{\url{http://www.gbhatnagar.com}, \url{http://www.mat.univie.ac.at/~kratt/}}

\ArticleDates{Received February 28, 2018, in final form May 27, 2018; Published online May 30, 2018}

\Abstract{We evaluate the determinant of a matrix whose entries are elliptic hypergeometric terms and whose form is reminiscent of Sylvester matrices. A hypergeometric determinant evaluation of a matrix of this type has appeared in the context of approximation theory, in the work of Feng, Krattenthaler and Xu. Our determinant evaluation is an elliptic extension of their evaluation, which has two additional parameters (in addition to the base $q$ and nome $p$ found in elliptic hypergeometric terms). We also extend the evaluation to a formula transforming an elliptic determinant into a multiple of another elliptic determinant. This transformation has two further parameters. The proofs of the determinant evaluation and the transformation formula require an elliptic determinant lemma due to Warnaar, and the application of two $C_n$ elliptic formulas that extend Frenkel and Turaev's $_{10}V_9$ summation formula and $_{12}V_{11}$ transformation formula, results due to Warnaar, Rosengren, Rains, and Coskun and Gustafson.}

\Keywords{determinant; $C_n$ elliptic hypergeometric series; Sylvester matrix}

\Classification{33D67; 15A15}

\renewcommand{\thefootnote}{\arabic{footnote}}
\setcounter{footnote}{0}

\section{Introduction}
The determinant of a Sylvester matrix is used to determine whether two polynomials have a~common root. Recently, in the context of approximation theory, a determinant of a hypergeometric matrix was evaluated by Feng, Xu and the second author \cite{FKX2017}. The matrix they considered resembles a Sylvester matrix. The objective of this paper is to give an elliptic extension of their determinant evaluation.

We briefly discuss Sylvester matrices. Consider the Sylvester matrix that corresponds to the polynomials
\begin{gather*}
x^2+2x+1 = (x+1)^2
\end{gather*}
and
\begin{gather*}
x^3+3x^2+3x+1 = (x+1)^3
\end{gather*}
given by
\begin{gather*}
\begin{pmatrix}
1 & 2 & 1 & 0 & 0\\
0 & 1& 2 & 1 & 0\\
0 & 0 &1 & 2 & 1 \\
1 & 3 & 3 & 1 & 0\\
0& 1 & 3 & 3 & 1
\end{pmatrix}.
\end{gather*}
Its determinant is $0$, which indicates the obvious fact that the two polynomials $(x+1)^2$ and $(x+1)^3$ have a common root.

More generally, consider the Sylvester matrix defined as follows. Let
\begin{gather*}
b_{ij}(s, r) := \binom{r}{j-i} s^{j-i}
\end{gather*}
and consider the $ (r_1+r_2) \times (r_1+r_2)$ matrix $B=(b^{\prime}_{ij})$, where
\begin{gather*}
b^{\prime}_{ij} =\begin{cases}
b_{ij}(s_1,r_1), & \text{for } 0\leq i \leq r_2-1,\\
b_{i-r_2,j}(s_2,r_2), & \text{for } r_2\leq i \leq r_1+r_2-1 .
\end{cases}
\end{gather*}
Then its determinant is given by
\begin{gather}\label{syl-bin}
\det B = (-1)^{r_1r_2} (s_1-s_2)^{r_1r_2}.
\end{gather}
For example, for $r_1=2$ and $r_2=3$, we have
\begin{gather*}
\det\begin{pmatrix}
1 & 2s_1 & s_1^2 & 0 & 0\\
0 & 1& 2s_1 & s_1^2 & 0\\
0 & 0 &1 & 2s_1 & s_1^2 \\
1 & 3s_2 & 3s_2^2 & s_2^3 & 0\\
0& 1 & 3s_2 & 3s_2^2 & s_2^3
\end{pmatrix}= (s_1-s_2)^6.
\end{gather*}
This explains why the determinant is $0$ when $s_1$ and $s_2$ are both $1$. The matrix of this last example is the Sylvester matrix corresponding to the polynomials
\begin{gather*}
x^2+2s_1x+s_1^2 = (x+s_1)^2
\end{gather*}
and
\begin{gather*}
x^3+3 s_2 x^2 +3s_2^2 x + s_2^3 = (x+s_2)^3.
\end{gather*}

The formula for $\det B$ follows from a well-known result concerning the determinant of Syl\-vester polynomials, given in, for example, Loos~\cite[Theorem~1, p.~177]{Loos1983}. Consider the polynomials
\begin{gather*}
C \prod_{i=1}^{r_1} (x-\gamma_i) \qquad \text{and} \qquad K \prod_{j=1}^{r_2} (x-\beta_j),
\end{gather*}
where $C$, $K$ are constants. Then the determinant of the corresponding Sylvester matrix is given by
\begin{gather*}
C^{r_2}K^{r_1}\prod_{i=1}^{r_1}\prod_{j=1}^{r_2} (\gamma_i-\beta_j).
\end{gather*}

However, this classic idea fails in the evaluation of the determinant of the following ``Syl\-vester\-esque'' matrix, which appears in the context of approximation theory in work of the second author with Feng and Xu~\cite{FKX2017}.

To state their formula, we require the {\it rising factorial} (or {\it Pochhammer symbol}) $\rfac{a}{n}$, which is defined by $\rfac{a}{0}:=1$ and
\begin{gather*}
\rfac{a}{k}:=\prod_{j=0}^{k-1} (a+j), \qquad\text{for $k=1, 2, \dots$}.
 \end{gather*}
Let
\begin{gather*}
m_{ij}(s_1, s_2, r) :=\binom{r}{j-i} \frac{\rfac{s_1+i}{j-i}}
{\rfac{s_1+s_2+ {i+j-1}}{j-i}\rfac{ s_1+s_2 +{r+2i}}{j-i}}.
\end{gather*}
Consider the $ (r_1+r_2) \times (r_1+r_2)$ matrix $M=(m^{\prime}_{ij})$, where
\begin{gather*}
m^{\prime}_{ij} =
\begin{cases}
m_{ij}(s_1, s_2, r_1), & \text{for } 0\leq i \leq r_2-1,\\
(-1)^{j-i-r_2}m_{i-r_2,j}(s_2, s_1, r_2), & \text{for } r_2\leq i \leq r_1+r_2-1 .
\end{cases}
\end{gather*}
This matrix is not quite a Sylvester matrix, but still has a nice determinant evaluation (see \cite[Theorem~4.1]{FKX2017})
\begin{gather}\label{feng-krat-xu}
\det M = (-1)^{r_1r_2} \prod_{j=1}^{r_1} \frac{1} {\rfac{s_1+s_2+ {r_1+r_2+j-2}}{r_2}}.
\end{gather}

It is not obvious, but the formula for $\det{M}$ extends the one for $\det{B}$. To obtain \eqref{syl-bin} from~\eqref{feng-krat-xu}, multiply row $i$ of $\det M$ by
\begin{gather*}
\begin{cases}
\displaystyle \frac{t^i}{(s_1+s_2)^{2i}}, & \text{for $0\leq i \leq r_2-1$,} \\
\displaystyle \frac{t^{i-r_2}}{(s_1+s_2)^{2(i-r_2)}}, & \text{for $r_2\leq i \leq r_1+r_2-1$,}
\end{cases}
\end{gather*}
column $j$ by
\begin{gather*}
\frac{(s_1+s_2)^{2j}}{t^j},\qquad \text{for $0\leq j \leq r_1+r_2-1$},
\end{gather*}
and multiply by the corresponding products on the right hand side of~\eqref{feng-krat-xu}. Next, replace $s_1$ by~$s_1t$, $s_2$ by~$s_2 t$ and take the limit as $t\to \infty$. Equation~\eqref{syl-bin} follows after replacing $s_2$ by $-s_2$ on both sides.

In this paper, we extend the formula for $\det M$ to one with elliptic hypergeometric terms. Our first result (stated in Section~\ref{th1}) is the evaluation of an elliptic determinant, which has two additional parameters even when specialized to the hypergeometric case.

Our proof (in Section~\ref{proof-th1}) is an elliptic extension of the one given in \cite{FKX2017}, and requires a determinant lemma due to Warnaar \cite[Lemma~5.3]{SOW2002} and a $C_n$ summation theorem. Warnaar's determinant lemma is an elliptic extension of a~very useful determinant lemma due to the second author \cite[Lemma~5]{Krat1999}. The $C_n$ summation theorem was conjectured by Warnaar~\cite[Corollary~6.2, $x=q$]{SOW2002}, proved by Rosengren~\cite{HR2001} (and in more generality by Rains \cite[Theorem~4.9]{Rains2006} and, independently by Coskun and Gustafson~\cite{CG2006}). A combinatorial proof was given by Schlosser~\cite{MS2007a}.

The summation formula is a $C_n$ extension of Frenkel and Turaev's~\cite{FT1997} $_{10}V_9$ summation formula (see \cite[equation~(11.4.1)]{GR90}). There is a more general transformation formula which is a $C_n$ extension of Frenkel and Turaev's $_{12}V_{11}$ transformation formula, given in \cite[equation~(11.5.1)]{GR90}. Again, this was conjectured by Warnaar~\cite[Conjecture~6.1, $x=q$]{SOW2002}, and proved~-- in more generality, and independently~-- by Rains~\cite[Theorem~4.9]{Rains2006} and by Coskun and Gustafson~\cite{CG2006}. (See also~\cite{HR2018a} for an elementary proof of \cite[Conjecture~6.1, $x=q$]{SOW2002}.)

Naturally, we consider what happens if we use the $C_n$ $_{12}V_{11}$ transformation formula. This leads (in Section~\ref{th2}) to a surprisingly elegant transformation formula between two elliptic determinants. In Section~\ref{2reductionto1}, we show how to recover our determinant evaluation from this transformation formula by using elementary determinant operations.

\section{An elliptic determinant evaluation}\label{th1}
In this section we state our first theorem, an elliptic determinant evaluation. To define the entries of the matrix under consideration, we need the notation for elliptic shifted factorials. For these notations, and background results, Gasper and Rahman \cite[Chapter~11]{GR90} is the standard reference. We recommend Rosengren~\cite{HR2016-lectures} for a friendly introduction to elliptic hypergeometric series.

We follow Gasper and Rahman \cite[(11.6.2)]{GR90} and define products as follows:
\begin{gather}\label{proddef}
\prod_{j=k}^n a_j := \begin{cases}
a_ka_{k+1}\cdots a_n, & {\text{if } n\geq k},\\
1, & {\text{if } n= k-1},\\
(a_{n+1}a_{n+2}\cdots a_{k-1})^{-1}, &{\text{if } n\leq k-2}.
\end{cases}
\end{gather}
The {\em $q$-shifted factorials}, for $k$ any integer, are defined as
\begin{gather*}
\pqrfac{a}{k}{q} :=\prod\limits_{j=0}^{k-1} \big(1-aq^j\big),
\end{gather*}
and for $|q|<1$,
\begin{gather*}
\pqrfac{a}{\infty}{q} := \prod\limits_{j=0}^{\infty} \big(1-aq^j\big).
\end{gather*}
The {\em modified Jacobi theta function} is defined as
\begin{gather*}
 \ellipticthetap{a} := \pqrfac{a}{\infty}{p} \pqrfac{p/a}{\infty}{p},
 \end{gather*}
where $a\neq 0$ and $|p|<1$. The {\em $q, p$-shifted factorials} (or {\em theta shifted factorials}), for $k$ an integer, are defined as
\begin{gather*}
\ellipticqrfac{a}{k} := \prod\limits_{j=0}^{k-1} \ellipticthetap{aq^j}.
\end{gather*}
The parameters $p$ and $q$ are called the {\em nome} and {\em base}, respectively. When $p=0$, the modified theta function $\ellipticthetap{a}$ reduces to $(1-a)$; and thus $\ellipticqrfac{a}{k}$ reduces to $ \pqrfac{a}{k}{q}$.

Further, we use the short-hand notations
\begin{gather*}
\ellipticthetap{a_1, a_2, \dots, a_r} := \ellipticthetap{a_1} \ellipticthetap{a_2}\cdots \ellipticthetap{a_r},\\
\ellipticqrfac{a_1, a_2,\dots, a_r}{k} := \ellipticqrfac{a_1}{k} \ellipticqrfac{a_2}{k}\cdots\ellipticqrfac{a_r}{k},\\
\pqrfac{a_1, a_2,\dots, a_r}{k}q := \pqrfac{a_1}{k}q \pqrfac{a_2}{k}q\cdots\pqrfac{a_r}{k}q.
\end{gather*}

Next, we define the elliptic matrix under consideration. Let $s_1$, $s_2$, $t_1$ and $t_2$ be arbitrary complex numbers, $r_1$, $r_2$, $i$, and $j$ be non-negative integers, and let
\begin{gather}
f_{ij}=f_{ij}(s_1, s_2, t_1, t_2, r_1, r_2) :=
q^{{\binom {j- i} 2}+r_2(j-i)} \frac{\ellipticqrfac{q}{r_1}}{\ellipticqrfac{q}{r_1-j+i}}\nonumber\\
\hphantom{f_{ij}=f_{ij}(s_1, s_2, t_1, t_2, r_1, r_2) :=}{} \times \frac{\ellipticqrfac{s_1q^i, t_1q^i, t_2q^i, s_1s_2^2 q^{r_1-r_2+i}/t_1t_2}{j-i}}
{\ellipticqrfac{q, s_1s_2q^{i+j-1}, s_1s_2 q^{r_1+2i}}{j-i}}.\label{f}
\end{gather}
We consider the $ (r_1+r_2) \times (r_1+r_2)$ matrix $F=(f^{\prime}_{ij})$, where
\begin{gather*}
f^{\prime}_{ij} =
\begin{cases}
f_{ij}(s_1, s_2, t_1, t_2, r_1, r_2), & \text{for } 0\leq i \leq r_2-1,\\
f_{i-r_2,j}(s_2, s_1, s_1s_2/t_1, s_1s_2/t_2, r_2, r_1), & \text{for
} r_2\leq i \leq r_1+r_2-1 .
\end{cases}
\end{gather*}
The form of $F$ is similar to that of Sylvester matrices. Observe that, in view of \eqref{proddef}, we have
\begin{gather*}
\frac{1}{\qrfac{q}{-r}} =0, \qquad \text{for } r = 1, 2, 3, \dots.
\end{gather*}
Thus, for the first $r_2$ rows, $m_{ij}=0$ if $j<i$, and if $j>r_1+i$. Further, the first non-zero entry is $1$ (when $i=j$). Similar remarks apply for the next $r_1$ rows of $F$.

\begin{Theorem}\label{GB-Krat-det2} Let $F$ be the matrix defined above. Then
\begin{gather}
\det{F} = (-1)^{r_1r_2} \left( \frac{t_1t_2}{s_2}\right)^{r_1r_2} q^{\frac{1}{2} r_1r_2(r_1+4r_2-3)} \nonumber\\
\hphantom{\det{F} =}{} \times \prod_{j=1}^{r_1} \frac{\ellipticqrfac{s_2q^{-r_2+j}/t_1,
 s_2q^{-r_2+j}/t_2,s_1s_2q^{-r_2+j}/t_1t_2}{r_2}} {\ellipticqrfac{s_1s_2q^{r_1+r_2+j-2}}{r_2}}.\label{GB-Krat-det2-eqn}
\end{gather}
\end{Theorem}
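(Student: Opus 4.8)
The plan is to adapt to the elliptic setting the argument of~\cite{FKX2017} that establishes the hypergeometric evaluation~\eqref{feng-krat-xu}. The matrix $F$ has the sparsity pattern of a Sylvester matrix: by~\eqref{proddef} (which forces $1/\ellipticqrfac{q}{-r}=0$ for $r\ge 1$) each of the first $r_2$ rows has its nonzero entries $f_{i,i+k}$, $0\le k\le r_1$, confined to a window of length $r_1+1$ sliding with $i$, and each of the last $r_1$ rows to a window of length $r_2+1$. The structural fact driving the proof is that, for fixed $i$, the finite sequence $\big(f_{i,i+k}\big)_{0\le k\le r_1}$ is, up to a factor depending only on $i$, the coefficient sequence of a terminating very-well-poised elliptic (${}_{10}V_9$-type) series, and likewise for the last $r_1$ rows after the stated replacements $r_1\leftrightarrow r_2$, $t_\ell\mapsto s_1s_2/t_\ell$. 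Consequently there is a single family of linear functionals $e_0,\dots,e_{r_1+r_2-1}$ and evaluation points $x_0,\dots,x_{r_1+r_2-1}$ so that, for each row, the combination $\sum_k f'_{i,i+k}\,e_{i+k}(x)$ is an explicit ${}_{10}V_9$-type sum in $x$, which by Frenkel and Turaev's summation—in its $C_n$ form, due to Warnaar, Rosengren, Rains, and Coskun--Gustafson—is a product of theta functions, and therefore vanishes precisely when $x$ is one of the $r_1$ (respectively $r_2$) points singled out by the numerator parameters of that series.

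Accordingly I would form the matrix $N=\big(e_l(x_j)\big)_{0\le l,j\le r_1+r_2-1}$, taking $x_0,\dots,x_{r_1-1}$ to be the zeros attached to the first block and $x_{r_1},\dots,x_{r_1+r_2-1}$ those attached to the second block, and compute $FN$. By the preceding paragraph $FN$ is block anti-triangular, $FN=\begin{pmatrix}0&B\\ C&D\end{pmatrix}$ with $0$ of size $r_2\times r_1$, $B$ of size $r_2\times r_2$ and $C$ of size $r_1\times r_1$, so that $\det F\cdot\det N=(-1)^{r_1r_2}\det B\,\det C$. The entries of $B$ and $C$ are again ${}_{10}V_9$-sums, now evaluated at points that are \emph{not} among their own zeros, hence explicit nonzero products of theta shifted factorials, once more by the summation theorem.

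It then remains to evaluate $\det B$, $\det C$ and $\det N$. After pulling the obvious common factors out of rows and columns, each of these determinants has the shape to which Warnaar's elliptic determinant lemma~\cite[Lemma~5.3]{SOW2002} applies: the $(i,j)$ entry is a product of theta functions whose dependence on $j$ is a ``staircase'' of theta shifted factorials, and the lemma turns it into a product of ``difference'' factors $\ellipticthetap{x_i/x_j}$ together with ``interaction'' factors between the parameter families. Forming $(-1)^{r_1r_2}\det B\,\det C/\det N$, the $\ellipticthetap{x_i/x_j}$-type factors should cancel, leaving precisely the product over $j=1,\dots,r_1$ in~\eqref{GB-Krat-det2-eqn}; the prefactor $(-1)^{r_1r_2}(t_1t_2/s_2)^{r_1r_2}q^{\frac{1}{2}r_1r_2(r_1+4r_2-3)}$ is then assembled from the powers $q^{\binom{j-i}{2}+r_2(j-i)}$ in~\eqref{f}, from the prefactors in the ${}_{10}V_9$ summations, and from the $q$-powers generated when theta shifted factorials with negative or shifted index are rewritten via~\eqref{proddef}.

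I expect the main obstacle to be bookkeeping rather than conceptual. The delicate points are: (i) choosing the functionals $e_l$ and points $x_j$ so that \emph{both} families of row vectors are simultaneously recognized as very-well-poised coefficient sequences, which forces a specific identification in which the $j$-dependent parameter $s_1s_2q^{i+j-1}$ in~\eqref{f} must be reconciled with the substitution $t_\ell\mapsto s_1s_2/t_\ell$ used in the second block, and which is where the genuinely $C_n$ (rather than one-variable) form of the ${}_{10}V_9$ summation is needed; (ii) checking that the balancing and very-well-poised conditions of that summation hold after these substitutions; and (iii) the final product manipulation—tracking empty and negatively indexed theta products and every power of $q$—so as to match the right-hand side of~\eqref{GB-Krat-det2-eqn} exactly. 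As a fallback I would keep the ``identification of factors'' method: regard $\det F$ as an elliptic function of a suitable ratio of the parameters, bound its number of zeros, exhibit enough zeros from coincidences of rows or columns and from the vanishing windows forced by~\eqref{proddef}, and fix the constant by specializing to a triangular instance; but the route through Warnaar's lemma and the $C_n$ summation parallels~\cite{FKX2017} and should be the more efficient one.
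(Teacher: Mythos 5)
There is a genuine gap at the heart of your main route, and it sits exactly where the introduction warns that ``this classic idea fails'' for Sylvesteresque matrices. Your plan hinges on finding column functionals $e_l$ and points $x_j$ so that each row sum $\sum_k f'_{i,i+k}\,e_{i+k}(x)$ becomes a terminating very-well-poised series that \emph{evaluates} (Frenkel--Turaev style) to a theta product, whose zeros then produce the $r_2\times r_1$ zero block of $FN$. This is asserted but not constructed, and a parameter count shows it cannot be carried out as described. First, the entries \eqref{f} depend on $i$ and $j$ separately through $\ellipticqrfac{s_1s_2q^{i+j-1}}{j-i}$ and $\ellipticqrfac{s_1s_2q^{r_1+2i}}{j-i}$, so the rows are not shifted copies of a single coefficient sequence; in particular the natural base of the very-well-poised structure in row $i$ is $a_i=s_1s_2q^{2i-1}$, which moves with the row. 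Second, relative to that base the five parameter slots of a ${}_{10}V_9(a_i;b_1,\dots,b_5)$ are already filled by $s_1q^i$, $t_1q^i$, $t_2q^i$, $s_1s_2^2q^{r_1-r_2+i}/t_1t_2$ and $q^{-r_1}$, and the balancing condition $b_1\cdots b_5=a_i^2q$ fails by the factor $q^{1-r_2}$. A column functional can only \emph{add} factors, so the evaluation variable $x$ must enter as a further very-well-poised pair $\ellipticqrfac{u(x)q^i}{k}/\ellipticqrfac{s_1s_2q^{i}/u(x)}{k}$; that turns the row sum into a ${}_{12}V_{11}$, which transforms but does not evaluate, and the ${}_{12}V_{11}$ balancing would pin $u(x)$ to the single value $s_1s_2q^{i+r_2-1}$ rather than leave a free evaluation point. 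So the block-anti-triangularization step, which carries the whole proof, does not go through; your fallback (identification of factors) is likewise only named, not executed.

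The paper's proof uses your two external ingredients but in a different architecture, and it is worth seeing where the multivariable summation actually enters. One Laplace-expands $\det F$ along the first $r_2$ rows, as in \eqref{laplace}; each $r_2\times r_2$ minor of the top block and the complementary $r_1\times r_1$ minor of the bottom block is evaluated in closed form by Warnaar's determinant lemma, in the guise of Lemma~\ref{det-lemma}; and the resulting sum over the index sets $0\le k_0<\dots<k_{r_2-1}\le r_1+r_2-1$ is recognized, after rewriting the $l_j$-products in terms of the $k_j$'s, as the $C_n$ ${}_{10}V_9$ summation \eqref{rosengren}. The genuinely $C_n$ (as opposed to one-variable) summation is thus needed to sum over the \emph{choice of complementary minors}, not inside any single row, which is where your sketch invokes it. If you want to salvage a ``multiply by an auxiliary matrix'' argument, a more promising variant is to take for $N$ the inverse of the full $(r_1+r_2)\times(r_1+r_2)$ array $\left(f_{ij}(s_1,s_2,t_1,t_2,r_1,r_2)\right)$ (unitriangular, hence of determinant $1$), reducing $\det F$ to an $r_1\times r_1$ determinant of genuine ${}_{10}V_9$-type sums; but that is a different proof and would still require the elliptic Bailey-type matrix inversion plus a closed-form evaluation of the resulting entries.
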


\begin{rem} Theorem~\ref{GB-Krat-det2} can also be stated as
\begin{gather}
\det{F} = s_1^{r_1r_2}q^{r_1r_2(r_1+r_2-1)+r_1{\binom {r_2} 2}} \nonumber\\
\hphantom{\det{F} =}{} \times\prod_{j=1}^{r_1} \frac{\ellipticqrfac{s_2q^{-r_2+j}/t_1, s_2q^{-r_2+j}/t_2,
 t_1t_2q^{-r_1+j}/s_1s_2}{r_2}} {\ellipticqrfac{s_1s_2q^{r_1+r_2+j-2}}{r_2}}.\label{det2-form2}
\end{gather}
\end{rem}

Next we take a special case to obtain a $q$-analogue of \eqref{feng-krat-xu}. Let
\begin{gather}\label{u}
u_{ij}=u_{ij}(s_1, s_2, r_1, r_2) := q^{{\binom {j- i} 2}}\frac{\pqrfac{q}{r_1}q \pqrfac{s_1q^i}{j-i}q}{\pqrfac{q}{r_1-j+i}q \pqrfac{q, s_1s_2q^{i+j-1}, s_1s_2 q^{r_1+2i}}{j-i}q}.
\end{gather}
We consider the $ (r_1+r_2) \times (r_1+r_2)$ matrix $U=(u^{\prime}_{ij})$, where
\begin{gather*}
u^{\prime}_{ij} = \begin{cases}
q^{\binom{j}2-\binom{i}2} u_{ij}(s_1, s_2, r_1, r_2), & \text{for } 0\leq i \leq r_2-1,\\
\left({-1}/{s_2}\right)^{j-i+r_2} u_{i-r_2,j}(s_2, s_1, r_2, r_1), & \text{for
} r_2\leq i \leq r_1+r_2-1 .
\end{cases}
\end{gather*}
\begin{Corollary}\label{cor:qfengxu} Let $U$ be the matrix defined above. Then
\begin{gather}\label{qfengxu}
\det{U} = \frac{ (-1)^{r_1r_2} }{s_2^{r_1r_2}} q^{r_1\binom{r_2}{2}}\prod_{j=1}^{r_1} \frac{1}{\pqrfac{s_1s_2q^{r_1+r_2+j-2}}{r_2}q}.
\end{gather}
\end{Corollary}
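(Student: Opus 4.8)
The plan is to deduce Corollary~\ref{cor:qfengxu} from Theorem~\ref{GB-Krat-det2} by first setting the nome $p=0$ and then letting the two additional parameters $t_1$ and $t_2$ tend to infinity, after an appropriate rescaling of the rows and columns of the determinant.

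Setting $p=0$ turns every modified theta shifted factorial $\ellipticqrfac{a}{k}$ into the ordinary $q$-shifted factorial $\pqrfac{a}{k}{q}$. Comparing \eqref{f} with \eqref{u}, one reads off that at $p=0$ the upper-block entry $f_{ij}(s_1,s_2,t_1,t_2,r_1,r_2)$ equals $q^{r_2(j-i)}\pqrfac{t_1q^i,t_2q^i,s_1s_2^2q^{r_1-r_2+i}/t_1t_2}{j-i}{q}\,u_{ij}(s_1,s_2,r_1,r_2)$, and that the corresponding identity for the lower block follows from this under the substitution $(s_1,s_2,t_1,t_2,r_1,r_2)\mapsto(s_2,s_1,s_1s_2/t_1,s_1s_2/t_2,r_2,r_1)$ and the index shift $i\mapsto i-r_2$; under it the extra argument $s_1s_2^2q^{r_1-r_2+i}/t_1t_2$ becomes the large quantity $(t_1t_2/s_2)q^{i-r_1}$ (expressed in terms of the row index $i$ of $F$), while $t_1q^i$ and $t_2q^i$ become $(s_1s_2/t_1)q^{i-r_2}$ and $(s_1s_2/t_2)q^{i-r_2}$.

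Next I would compute the behaviour as $t_1,t_2\to\infty$ from the elementary asymptotics $\pqrfac{b}{k}{q}\to 1$ as $b\to 0$ and $\pqrfac{b}{k}{q}\sim(-b)^{k}q^{\binom{k}{2}}$ as $b\to\infty$, applied in the upper block to $\pqrfac{t_1q^i}{j-i}{q}$ and $\pqrfac{t_2q^i}{j-i}{q}$ and in the lower block to $\pqrfac{(t_1t_2/s_2)q^{i-r_1}}{j-i+r_2}{q}$, the remaining factors tending to $1$. This shows that $f'_{ij}|_{p=0}$ is asymptotically equal to $u'_{ij}$ times a factor that splits as a product of a function of the row index $i$ alone and a function of the column index $j$ alone; explicitly, writing $d_j:=(t_1t_2)^{-j}q^{-r_2j-\binom{j}{2}}$ for all $j$, and $c_i:=(t_1t_2)^{i}q^{r_2i+\binom{i}{2}}$ for $0\le i\le r_2-1$ and $c_i:=(t_1t_2)^{i-r_2}q^{\binom{i}{2}-\binom{r_2}{2}}$ for $r_2\le i\le r_1+r_2-1$, one finds $c_i d_j f'_{ij}|_{p=0}\to u'_{ij}$ as $t_1,t_2\to\infty$ --- in the lower block the sign and the power of $s_2$ produced by the monomial asymptotics combine precisely into the factor $(-1/s_2)^{j-i+r_2}$ occurring in $u'_{ij}$. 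Since a determinant is a polynomial in its entries and the rescaled matrix $\big(c_id_jf'_{ij}|_{p=0}\big)$ converges entrywise to $U$, it follows that $\det U=\lim_{t_1,t_2\to\infty}\big(\prod_i c_i\big)\big(\prod_j d_j\big)\det F|_{p=0}$.

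It then remains to pass to the limit on the right-hand side of \eqref{GB-Krat-det2-eqn} at $p=0$: the three $q$-shifted factorials $\pqrfac{s_2q^{-r_2+j}/t_1}{r_2}{q}$, $\pqrfac{s_2q^{-r_2+j}/t_2}{r_2}{q}$ and $\pqrfac{s_1s_2q^{-r_2+j}/t_1t_2}{r_2}{q}$ each tend to $1$, so only the prefactor and the denominator product $\prod_{j=1}^{r_1}\pqrfac{s_1s_2q^{r_1+r_2+j-2}}{r_2}{q}^{-1}$ survive in the limit. The apparent divergence $(t_1t_2/s_2)^{r_1r_2}$ there is cancelled against the factor $(t_1t_2)^{-r_1r_2}$ coming from $\prod_i c_i\prod_j d_j$ --- the net power of $t_1t_2$ contributed by the rescaling being $\binom{r_1}{2}+\binom{r_2}{2}-\binom{r_1+r_2}{2}=-r_1r_2$ --- which leaves exactly the factor $(-1)^{r_1r_2}s_2^{-r_1r_2}$ of \eqref{qfengxu}; and the one substantive check is that the powers of $q$ match, i.e.\ that the exponent $\frac{1}{2}r_1r_2(r_1+4r_2-3)$ of \eqref{GB-Krat-det2-eqn} together with the exponent $-r_2\binom{r_1}{2}-r_1r_2^2-r_1\binom{r_2}{2}$ accumulated by $\prod_i c_i\prod_j d_j$ simplifies to $r_1\binom{r_2}{2}$. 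Assembling these pieces yields \eqref{qfengxu}. The main obstacle is thus bookkeeping rather than ideas: one must keep the (sub)leading asymptotics of every $q$-shifted factorial straight, so that the rescaled entries converge \emph{exactly} to $u'_{ij}$ (in particular so that the stray signs and powers of $s_2$ settle onto $(-1/s_2)^{j-i+r_2}$), and then track every power of $q$ through the rescaling so that the accumulated $q$-exponent collapses to $r_1\binom{r_2}{2}$.
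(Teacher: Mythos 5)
Your proposal is correct and follows essentially the same route as the paper's own proof: set $p=0$ in Theorem~\ref{GB-Krat-det2}, rescale rows and columns by suitable powers of $t_1t_2$ and $q$, and let $t_1,t_2\to\infty$, with the paper using the exact reversal identities for $\pqrfac{t_1q^i,t_2q^i}{j-i}{q}$ and $\pqrfac{t_1t_2q^{-r_1+i}/s_2}{j-i+r_2}{q}$ where you invoke the equivalent leading asymptotics. Your bookkeeping checks out (in particular the net $q$-exponent $\tfrac12 r_1r_2(r_1+4r_2-3)-r_2\binom{r_1}{2}-r_1r_2^2-r_1\binom{r_2}{2}=r_1\binom{r_2}{2}$ and the cancellation of $(t_1t_2)^{r_1r_2}$), so there is nothing further to add.
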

\begin{rems}
(1) This determinant evaluation is a $q$-analogue of \eqref{feng-krat-xu}. To see this, we replace $s_1$ and $s_2$ by $q^{s_1}$ and $q^{s_2}$ (respectively), multiply both sides of \eqref{qfengxu} by $(1-q)^{r_1r_2}$ and take the limits as $q\to 1$.

To take entry-wise limits in the determinant, we need to multiply each entry by an appropriate power of~$(1-q)$. Thus we multiply each entry of column $j$ by $(1-q)^j$, and divide the first $r_2$ rows by $(1-q)^i$, and the last $r_1$ rows by $(1-q)^{i-r_2}$. We compensate by multiplying the resulting determinant by
\begin{gather*}
\prod_{j=0}^{r_1+r_2-1} \frac{1}{(1-q)^j}\prod_{i=0}^{r_2-1} {(1-q)^i}\prod_{i=r_2}^{r_1+r_2-1} {(1-q)^{i-r_2}} = \frac{1}{(1-q)^{r_1r_2}}.
\end{gather*}
This explains why we need to multiply by $(1-q)^{r_1r_2}$ on the left hand side. The limit on the right hand side too requires this additional power of $(1-q)$.

(2) Corollary~\ref{cor:qfengxu} being a $q$-analogue of \eqref{feng-krat-xu}, it may be the starting point for finding a~$q$-analogue for the best approximation result in \cite{FKX2017}.
\end{rems}

\begin{proof}We take $p=0$ in \eqref{GB-Krat-det2-eqn}, and after dividing both sides by $(t_1t_2)^{r_1r_2}$, take the limits as $t_1\to\infty$ and $t_2\to\infty$. From the right hand side, we obtain
\begin{gather*}
\frac{ (-1)^{r_1r_2} }{s_2^{r_1r_2}}q^{\frac{1}{2} r_1r_2(r_1+4r_2-3)}\prod_{j=1}^{r_1} \frac{1}{\pqrfac{s_1s_2q^{r_1+r_2+j-2}}{r_2}q}.
\end{gather*}

Before taking the limits on the left hand side, we use the elementary modifications
\begin{gather*}
\pqrfac{t_1q^i,t_2q^i}{j-i}{q} = \pqrfac{q^{1-j}/t_1,q^{1-j}/t_2}{j-i}{q} (t_1t_2)^{j-i} q^{2\binom{j-i}{2}+2i(j-i)}
\end{gather*}
to reverse some of the products in the first $r_2$ rows of $\det F$, and
\begin{gather*}
\pqrfac{t_1t_2q^{-r_1+i}/s_2}{j-i+r_2}{q}= \pqrfac{q^{1+r_1-r_2-j}s_2/t_1t_2}{j-i+r_2}{q}\\
\hphantom{\pqrfac{t_1t_2q^{-r_1+i}/s_2}{j-i+r_2}{q}=}{} \times (-1)^{j-i+r_2}\left( \frac{t_1t_2}{s_2}\right)^{j-i+r_2} q^{\binom{j-i+r_2}{2}+(-r_1+i)(j-i+r_2)}
\end{gather*}
in the last $r_1$ rows of the determinant. From the $j$th column of the determinant, we take out~$(t_1t_2)^j$. From the first $i$ rows, we can take out~$(t_1t_2)^{-i}$, and from the last $r_1$ rows, we take out $(t_1t_t)^{-(i-r_2)}$. This results in the product $(t_1t_2)^{r_1r_2}$ outside the determinant that cancels the product we divided earlier.

Now we take the limits as $t_1, t_2\to \infty$ term-wise in the determinant, to obtain a determinant whose entries are given by
\begin{gather*}
\begin{cases}
q^{2\binom{j}2-2\binom{i}2 +r_2(j-i)} u_{ij}(s_1, s_2, r_1, r_2), & \text{for } 0\leq i \leq r_2-1,\\
\left({-1}/{s_2}\right)^{j-i+r_2} q^{\binom{j-i+r_2}{2}+i(j-i+r_2)}u_{i-r_2,j}(s_2, s_1, r_2, r_1), & \text{for } r_2\leq i \leq r_1+r_2-1 .
\end{cases}
\end{gather*}

Again, we take out some powers of $q$ from the determinant, and cancel them from the other side to obtain Corollary~\ref{cor:qfengxu}.
\end{proof}

A slightly different limiting case yields another determinant evaluation. Again, let $u_{ij}$ be defined by \eqref{u}. We consider the matrix $V=(v^{\prime}_{ij})$, where
\begin{gather*}
v^{\prime}_{ij} =
\begin{cases}
u_{ij}(s_1, s_2, r_1, r_2), & \text{for } 0\leq i \leq r_2-1,\\
\left({s_1}/{s_2}\right)^{(j-i+r_2)/2} u_{i-r_2,j}(s_2, s_1, r_2, r_1), & \text{for } r_2\leq i \leq r_1+r_2-1 .
\end{cases}
\end{gather*}
\begin{Corollary}\label{cor:qfengxu2} Let $V$ be the matrix defined above. Then
\begin{gather*}
\det{V} = \left(\frac{s_1}{s_2}\right)^{\frac{1}{2}r_1r_2} q^{r_1\binom{r_2}{2}}\prod_{j=1}^{r_1} \frac{\pqrfac{(s_2/s_1)^{1/2}q^{-r_2+j}}{r_2}q}{\pqrfac{(s_1s_2)^{1/2}q^{j-1}, s_1s_2q^{r_1+r_2+j-2}}{r_2}q}.
\end{gather*}
\end{Corollary}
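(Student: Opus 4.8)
The plan is to derive Corollary~\ref{cor:qfengxu2} from Theorem~\ref{GB-Krat-det2} by an appropriate specialisation and limiting procedure, exactly parallel to the derivation of Corollary~\ref{cor:qfengxu} but choosing the scaling of $t_1,t_2$ so that the two theta factors $\ellipticqrfac{s_2q^{-r_2+j}/t_1}{r_2}$ and $\ellipticqrfac{s_2q^{-r_2+j}/t_2}{r_2}$ on the right-hand side of \eqref{GB-Krat-det2-eqn} survive the limit rather than blowing up. First I would set $p=0$ in \eqref{GB-Krat-det2-eqn}, so that all theta shifted factorials become $q$-shifted factorials and $f_{ij}$ reduces to a $q$-hypergeometric term. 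Then, rather than sending $t_1,t_2\to\infty$ after dividing by $(t_1t_2)^{r_1r_2}$, I would make a \emph{symmetric} choice: put $t_1=t_2=t$ and let $t\to$ the value that freezes the relevant factors — concretely, after the substitution the surviving right-hand side should be $\ellipticqrfac{(s_2/s_1)^{1/2}q^{-r_2+j}}{r_2}q$ over $\pqrfac{(s_1s_2)^{1/2}q^{j-1},\,s_1s_2q^{r_1+r_2+j-2}}{r_2}q$, which tells me that the correct normalisation is $t_1t_2 = s_1 s_2$ together with a further scaling by $\sqrt{s_1/s_2}$; this is precisely the substitution that makes $f_{i-r_2,j}(s_2,s_1,s_1s_2/t_1,s_1s_2/t_2,r_2,r_1)$ in the lower block turn into $u_{i-r_2,j}(s_2,s_1,r_2,r_1)$ up to the factor $(s_1/s_2)^{(j-i+r_2)/2}$ appearing in the definition of $V$.

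Concretely, the key steps in order are: (i) specialise $p=0$; (ii) substitute $t_1 = t\,(s_1/s_2)^{1/2}$ and $t_2 = (s_1 s_2)^{1/2}/t$ (or an equivalent pair whose product is $s_1 s_2$), so that $s_1 s_2^2 q^{r_1-r_2+i}/t_1 t_2 = s_2 q^{r_1-r_2+i}$ and the fourth numerator factor in \eqref{f} simplifies; (iii) in the upper block, the factor $\pqrfac{t_1 q^i, t_2 q^i}{j-i}q$ now reads $\pqrfac{t(s_1/s_2)^{1/2}q^i,\ (s_1s_2)^{1/2}q^i/t}{j-i}q$, and I would let $t\to\infty$ after pulling out the appropriate power of $t$ from each column and row exactly as in the proof of Corollary~\ref{cor:qfengxu}, using the elementary reversal identity $\pqrfac{a q^i}{j-i}q = \pqrfac{q^{1-j}/a}{j-i}q\,(-a)^{j-i}q^{\binom{j-i}2 + i(j-i)}$ applied once (to the $t$-dependent factor) so that the $t\to\infty$ limit is finite and nonzero; (iv) track the resulting powers of $q$ and of $s_1/s_2$ that come out of the columns and of the two row-blocks, cancel the corresponding prefactor from the $q^{\frac12 r_1 r_2(r_1+4r_2-3)}(t_1t_2/s_2)^{r_1r_2}$ coefficient on the right-hand side, and read off the normalisations $(s_1/s_2)^{(j-i+r_2)/2}$ needed to match the stated matrix $V$; (v) simplify the right-hand side using $t_1 t_2 = s_1 s_2$, so that $s_2 q^{-r_2+j}/t_1 = (s_2/s_1)^{1/2} q^{-r_2+j}/t \to$ is handled by the reversal in (iii) and $s_1 s_2 q^{-r_2+j}/t_1 t_2 = q^{-r_2+j}$, collapsing one of the three numerator products, while the factor $\pqrfac{(s_1 s_2)^{1/2}q^{j-1}}{r_2}q$ in the denominator arises from the surviving half of the reversed factor.

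The main obstacle I expect is \textbf{bookkeeping the exponents} — precisely verifying that all the powers of $t$ extracted from the columns and the two row-blocks cancel against each other so that the $t\to\infty$ limit of the determinant exists and is finite, and then that the residual powers of $q$ and of $s_1/s_2$ coming out match exactly the prefactors $(s_1/s_2)^{\frac12 r_1 r_2}$ and $q^{r_1\binom{r_2}{2}}$ in the claimed formula. A secondary subtlety is that the substitution $t_1 t_2 = s_1 s_2$ makes the arguments $s_1 s_2/t_1$ and $s_1 s_2/t_2$ (which feed the lower block as its "$t_1$" and "$t_2$") equal to $t_2$ and $t_1$ respectively, so one must check that the lower block, after the same column/row extractions, genuinely reduces to $u_{i-r_2,j}(s_2,s_1,r_2,r_1)$ times the clean power $(s_1/s_2)^{(j-i+r_2)/2}$ — in other words, that the symmetric scaling is consistent with the symmetric role of $t_1,t_2$ in the two blocks. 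Once these exponent identities are checked, the rest is a routine rewriting of $q$-shifted factorials, and the half-integer powers $(s_1/s_2)^{1/2}$, $(s_1 s_2)^{1/2}$, $(s_2/s_1)^{1/2}$ in the answer are simply the tell-tale of the symmetric choice $t_1 t_2 = s_1 s_2$.
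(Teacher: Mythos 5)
Your overall strategy---set $p=0$ in Theorem~\ref{GB-Krat-det2}, specialize the auxiliary parameters $t_1,t_2$, and clean up with row and column scalings---is the right one, and it is how the paper proceeds. But the concrete specialization you propose, namely coupling $t_1$ and $t_2$ so that their product is held fixed (at $s_1s_2$, or at $s_1$, which is what your displayed pair $t_1=t(s_1/s_2)^{1/2}$, $t_2=(s_1s_2)^{1/2}/t$ actually gives) while an auxiliary $t\to\infty$, does not work, for two reasons. First, if $t_1t_2=s_1s_2$ then the third factor on the right-hand side of \eqref{GB-Krat-det2-eqn} becomes $\pqrfac{s_1s_2q^{-r_2+j}/t_1t_2}{r_2}{q}=\pqrfac{q^{-r_2+j}}{r_2}{q}$, which vanishes for every $j$ with $1\leq j\leq\min(r_1,r_2)$; the right-hand side of the theorem is therefore identically zero on the subvariety $t_1t_2=s_1s_2$, and since $\pqrfac{s_2q^{-r_2+j}/t_2}{r_2}{q}$ simultaneously blows up as $t_2\to0$, you are left with a genuine $0\cdot\infty$ indeterminacy from which the nonzero formula of Corollary~\ref{cor:qfengxu2} cannot be read off. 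Second, on the left-hand side the entry factor $\pqrfac{s_1s_2^2q^{r_1-r_2+i}/t_1t_2}{j-i}{q}$ does not disappear when $t_1t_2$ is frozen: it survives as $\pqrfac{s_2q^{r_1-r_2+i}}{j-i}{q}$ in the upper block and as $\pqrfac{s_1q^{i-r_1-r_2}}{j-i+r_2}{q}$ in the lower block (row index $r_2\leq i\leq r_1+r_2-1$). These factor as ratios of column and row parts, but the column parts, $\pqrfac{s_2q^{r_1-r_2}}{j}{q}$ versus $\pqrfac{s_1q^{-r_1}}{j}{q}$, differ between the two blocks, so they cannot be removed by a single set of row and column scalings; the resulting matrix is not $V$.

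The paper avoids both problems by decoupling the two specializations: divide \eqref{GB-Krat-det2-eqn} by $t_2^{r_1r_2}$ and let $t_2\to\infty$ alone, so that $\pqrfac{s_2q^{-r_2+j}/t_2}{r_2}{q}$ and $\pqrfac{s_1s_2q^{-r_2+j}/t_1t_2}{r_2}{q}$ both tend to $1$ and the troublesome entry factor disappears, and only then set $t_1=(s_1s_2)^{1/2}$. This value of $t_1$ is the fixed point of $t_1\mapsto s_1s_2/t_1$, so the surviving factor $\pqrfac{(s_1s_2)^{1/2}q^i}{j-i}{q}=\pqrfac{(s_1s_2)^{1/2}}{j}{q}\big/\pqrfac{(s_1s_2)^{1/2}}{i}{q}$ is literally the same in both blocks and is removable by multiplying row $i$ by $\pqrfac{(s_1s_2)^{1/2}}{i}{q}$ (respectively $\pqrfac{(s_1s_2)^{1/2}}{i-r_2}{q}$) and extracting $\pqrfac{(s_1s_2)^{1/2}}{j}{q}$ from column $j$; this is exactly where the denominator factor $\pqrfac{(s_1s_2)^{1/2}q^{j-1}}{r_2}{q}$ in the corollary comes from, while $\pqrfac{(s_2/s_1)^{1/2}q^{-r_2+j}}{r_2}{q}$ is just $\pqrfac{s_2q^{-r_2+j}/t_1}{r_2}{q}$ evaluated at $t_1=(s_1s_2)^{1/2}$. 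Replacing your step (ii) by this asymmetric specialization, the rest of your outline (power extraction for the $t_2$-limit, exponent bookkeeping) goes through as you describe.
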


\begin{rem} When we take the hypergeometric case of Corollary~\ref{cor:qfengxu2}, by replacing $s_1$ by~$q^{s_1}$, $s_2$~by~$q^{s_2}$, and taking the limit as $q\to 1$, we obtain the determinant of a matrix very similar to the matrix $M$ considered in \eqref{feng-krat-xu}. The difference is that there are no alternating signs in the last $r_1$ rows of the matrix.
\end{rem}
\begin{proof}We take $p=0$ in \eqref{GB-Krat-det2-eqn}, divide both sides by $t_2^{r_1r_1}$ and take limits as $t_2\to\infty$. Next we replace $t_1$ by $(s_1s_2)^{1/2}$.

In the resulting determinant on the left hand side we multiply the $i$th row by
\begin{gather*}
\begin{cases}
\big( (s_1s_2)^{1/2};q\big)_i, & \text{for $0\leq i \leq r_2-1$,} \\
\big((s_1s_2)^{1/2};q\big)_{i-r_2}, & \text{for $r_2\leq i \leq r_1+r_2-1$,}
\end{cases}
\end{gather*}
and, to compensate, divide the determinant by
\begin{gather*}
\prod\limits_{i=0}^{r_2-1}\big( ( s_1s_2)^{1/2};q\big)_{i}\prod\limits_{i=r_2}^{r_1+r_2-1}\big( ( s_1s_2)^{1/2};q\big)_{i-r_2}.
\end{gather*}
In the resulting determinant, column $j$ has the common factor
\begin{gather*}
\big( (s_1s_2)^{1/2};q\big)_{j}
\end{gather*}
 that we can take out from each column. In addition, we take out negative signs and powers of~$q$ to obtain
\begin{gather*}
(-1)^{r_1r_2} q^{\frac{1}{2}r_1r_2(r_1+3r_2-2)} \frac{\prod\limits_{j=0}^{r_1+r_2-1} \pqrfac{(s_1s_2)^{1/2}}{j}{q}}	{\prod\limits_{i=0}^{r_2-1} \pqrfac{(s_1s_2)^{1/2}}{i}{q}	\prod\limits_{i=0}^{r_1-1}\pqrfac{(s_1s_2)^{1/2}}{i}{q}}\det V.
\end{gather*}

Now comparing with what we obtain on the right hand side after taking the limit $t_2\to\infty$ and $t_1=(s_1s_2)^{1/2}$, we obtain Corollary~\ref{cor:qfengxu2}.
\end{proof}

\section{Proof of Theorem~\ref{GB-Krat-det2}}\label{proof-th1}
In this section we give a proof of Theorem~\ref{GB-Krat-det2}. First we record the required results in a form suitable for our use. From now on, we suppress $q$ and $p$ and denote the theta shifted factorials by $\qrfac{a}{k}$.

We begin with a determinant lemma due to Warnaar \cite[Lemma~5.3]{SOW2002}.
\begin{Proposition}\label{warnaar-determinant}
Let $A_0, \dots, A_{n-1}$, $c$, and $x_0, \dots, x_{n-1}$, be arbitrary complex numbers. If, for $i=0,
1, \dots, n-1$, $P_i$ is analytic in $0<|x|<\infty$, and satisfies the
following two conditions:
\begin{enumerate}\itemsep=0pt
\item[$1)$] $P_i(px)=(c/x^2p)^iP_i(x)$ $($quasi-periodicity$)$, and
\item[$2)$] $P_i(c/x)=P_i(x)$ $($symmetry$)$,
\end{enumerate}
then
\begin{gather*}
\det_{0\leq i,j\leq n-1} \left(P_i(x_j)\prod_{k=i+1}^{n-1}\elliptictheta{A_kx_j}\elliptictheta{cA_k/x_j} \right) \\
\qquad{} = \prod_{0\leq i<j\leq n-1} (cA_j/x_j)\elliptictheta{x_j/x_i}\elliptictheta{x_ix_j/c}\prod_{i=0}^{n-1} P_i(1/A_i).
\end{gather*}
\end{Proposition}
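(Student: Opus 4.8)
The plan is to follow the ``identification of factors'' philosophy (see \cite{Krat1999}), with theta functions in place of polynomials, combined with an induction on~$n$. Throughout write
\[
 g_i(x):=P_i(x)\prod_{k=i+1}^{n-1}\elliptictheta{A_kx}\,\elliptictheta{cA_k/x},\qquad 0\le i\le n-1,
\]
so that the matrix whose determinant is sought equals $\big(g_i(x_j)\big)_{0\le i,j\le n-1}$; denote this determinant by $D=D(x_0,\dots,x_{n-1})$. The first step is to observe that, although the $P_i$ have $i$-dependent quasi-periodicities, \emph{all} the $g_i$ share one and the same transformation behaviour. Using the elementary identity $\elliptictheta{pz}=-z^{-1}\elliptictheta{z}$, one finds $\elliptictheta{A_k\,px}\,\elliptictheta{cA_k/(px)}=\frac{c}{px^2}\,\elliptictheta{A_kx}\,\elliptictheta{cA_k/x}$, so the product over $k=i+1,\dots,n-1$ contributes a factor $\big(\frac{c}{px^2}\big)^{n-1-i}$, which combines with the factor $\big(\frac{c}{px^2}\big)^{i}$ coming from hypothesis~1 on~$P_i$ to yield
\[
 g_i(px)=\Big(\frac{c}{px^2}\Big)^{n-1}g_i(x),\qquad g_i(c/x)=g_i(x),
\]
the symmetry being immediate from hypothesis~2 together with $\elliptictheta{A_k\,c/x}\,\elliptictheta{cA_k/(c/x)}=\elliptictheta{cA_k/x}\,\elliptictheta{A_kx}$. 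The cancellation of the $i$-dependent powers is the crux, and it explains why hypothesis~1 is stated in that precise form.

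The second step is the identification itself. Fixing $x_0,\dots,x_{n-2}$ and regarding $D$ as a function of the single variable $x:=x_{n-1}$, only the last column of the matrix depends on~$x$, and by the first step all of its entries carry the common multiplier; hence $D(x)$ is analytic in $0<|x|<\infty$ with $D(px)=\big(\frac{c}{px^2}\big)^{n-1}D(x)$ and $D(c/x)=D(x)$. Equivalently, $x^{\,n-1}D(x)$ is a theta function of degree $2(n-1)$ in the usual sense, and the symmetry translates into invariance under $x\mapsto c/x$ after the twist by~$x^{\,n-1}$. The space of such symmetric theta functions has dimension~$n$. On the other hand $D(x)$ vanishes whenever $x=x_j$ for some $j\le n-2$ (two columns coincide), hence --- by the symmetry --- also whenever $x=c/x_j$; this is $n-1$ independent linear conditions. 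I therefore conclude
\[
 D=C(x_0,\dots,x_{n-2})\;x_{n-1}^{-(n-1)}\prod_{j=0}^{n-2}\elliptictheta{x_{n-1}/x_j}\,\elliptictheta{x_{n-1}x_j/c},
\]
with $C$ not involving~$x_{n-1}$; the genericity of $x_0,\dots,x_{n-2}$ needed for uniqueness is harmless and is removed at the end by analytic continuation in the parameters.

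The third step determines~$C$ by induction on~$n$, the case $n=1$ being trivial since hypothesis~1 forces $P_0$ to be constant, whence $\det\big(P_0(x_0)\big)=P_0(x_0)=P_0(1/A_0)$. For the inductive step I specialise $x_{n-1}=1/A_{n-1}$: because $\elliptictheta{1}=0$, the factor with $k=n-1$ makes $g_i(1/A_{n-1})=0$ for every $i<n-1$, while $g_{n-1}(1/A_{n-1})=P_{n-1}(1/A_{n-1})$. Laplace expansion along the last column then gives $D\big|_{x_{n-1}=1/A_{n-1}}=P_{n-1}(1/A_{n-1})\det_{0\le i,j\le n-2}\big(g_i(x_j)\big)$, and pulling the common factor $\elliptictheta{A_{n-1}x_j}\,\elliptictheta{cA_{n-1}/x_j}$ out of column~$j$ turns the surviving minor into precisely the $(n-1)$-dimensional instance of the lemma, with data $A_0,\dots,A_{n-2}$, $x_0,\dots,x_{n-2}$, $P_0,\dots,P_{n-2}$. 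Applying the induction hypothesis and comparing with the displayed expression for~$D$ evaluated at $x_{n-1}=1/A_{n-1}$ --- where the theta products are tidied using $\elliptictheta{1/z}=-z^{-1}\elliptictheta{z}$ --- pins down~$C$ in closed form. Substituting it back, the trailing product over $j\le n-2$ merges with the factor $\prod_{0\le i<j\le n-2}(\cdots)$ from the induction hypothesis to rebuild $\prod_{0\le i<j\le n-1}(cA_j/x_j)\,\elliptictheta{x_j/x_i}\,\elliptictheta{x_ix_j/c}$, while $P_{n-1}(1/A_{n-1})$ joins $\prod_{i=0}^{n-2}P_i(1/A_i)$; this is the asserted evaluation.

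I expect the main obstacle to be the second step: installing the twist by~$x^{\,n-1}$ so that $D$ becomes a genuine theta function, computing the dimension of the symmetric subspace correctly, and justifying the ``unique up to a scalar'' conclusion uniformly --- in particular for non-generic configurations of the~$x_j$, which must be handled by a separate continuity argument. By comparison, the quasi-periodicity bookkeeping in the first step (especially tracking the powers of~$p$ produced by $\elliptictheta{pz}=-z^{-1}\elliptictheta{z}$) and the constant-matching algebra in the third step are routine, if error-prone.
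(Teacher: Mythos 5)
Your proof is correct. A point of comparison first: the paper does not prove this Proposition at all --- it is imported verbatim from Warnaar \cite[Lemma~5.3]{SOW2002} --- so there is no in-paper argument to measure you against; what you have written is essentially the standard ``identification of factors plus induction'' proof of such determinant lemmas, in the spirit of the non-elliptic prototype \cite[Lemma~5]{Krat1999}. All the key computations check out: $\elliptictheta{pz}=-z^{-1}\elliptictheta{z}$ does give $g_i(px)=(c/px^2)^{n-1}g_i(x)$ uniformly in $i$ (this cancellation is indeed the point of hypothesis~1); the candidate $h(x)=x^{-(n-1)}\prod_{j}\elliptictheta{x/x_j}\elliptictheta{xx_j/c}$ lies in the symmetric space and, upon specialising $x_{n-1}=1/A_{n-1}$ and using $\elliptictheta{1/z}=-z^{-1}\elliptictheta{z}$, the constant $C$ you extract reproduces exactly the factors $(cA_{n-1}/x_{n-1})^{n-1}\prod_{i\le n-2}\elliptictheta{x_{n-1}/x_i}\elliptictheta{x_ix_{n-1}/c}$ needed to close the induction; and the base case is handled by the Liouville argument on $\mathbb{C}^*/p^{\mathbb{Z}}$.

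The one place you flag as the main obstacle --- the dimension count for the space of symmetric theta functions and the independence of the $n-1$ vanishing conditions --- is correct as stated (the symmetric subspace of degree $m=n-1$ has dimension $m+1=n$), but you can sidestep it entirely: for generic $x_0,\dots,x_{n-2}$ the function $h$ has only simple zeros, $D$ vanishes at every zero of $h$ (at $x_jp^k$ because two columns coincide, and at $(c/x_j)p^k$ by the symmetry and quasi-periodicity you established), so $D/h$ is analytic on $0<|x|<\infty$ and invariant under $x\mapsto px$, hence constant. This replaces the linear-algebra step by a one-line Liouville argument and leaves only the continuity argument for degenerate configurations, which you already have.
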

Warnaar's determinant lemma is used to evaluate the following determinant.
\begin{Lemma}\label{det-lemma}
For all positive integers $r_2$, we have
\begin{gather*}
\det_{0\leq i,j\leq r_2-1}
\Bigg( q^{-ik_j} \qrfac{q^{k_j-i+1}, aq^{k_j} }{i} \qrfac{q^{r_1-k_j+i+1}, aq^{r_1+k_j+i+1} }{r_2-i-1}\Bigg)\\
\qquad{} = q^{-\sum\limits_{i=0}^{r_2-1}ik_i}
\prod_{0\leq i<j \leq r_2-1} \elliptictheta{q^{k_j-k_i}, aq^{k_i+k_j}}\prod\limits_{i=0}^{r_2-1} \qrfac{q^{r_1+1}, aq^{r_1+i}}{i}.
\end{gather*}
\end{Lemma}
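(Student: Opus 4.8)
The plan is to recognize this determinant as a direct application of Warnaar's determinant lemma (Proposition~\ref{warnaar-determinant}). First I would substitute $x_j:=q^{k_j}$ and rewrite the $(i,j)$ entry entirely in terms of theta functions of $x_j$. Using $\qrfac{q^{k_j-i+1}}{i}=\prod_{\ell=0}^{i-1}\elliptictheta{x_jq^{-\ell}}$ and $\qrfac{aq^{k_j}}{i}=\prod_{\ell=0}^{i-1}\elliptictheta{ax_jq^{\ell}}$ for the ``head'' factors, and $\qrfac{q^{r_1-k_j+i+1}}{r_2-i-1}\qrfac{aq^{r_1+k_j+i+1}}{r_2-i-1}=\prod_{k=i+1}^{r_2-1}\elliptictheta{q^{r_1+k}/x_j}\elliptictheta{aq^{r_1+k}x_j}$ for the ``tail'' factors, the comparison with the tail product $\prod_{k=i+1}^{n-1}\elliptictheta{A_kx_j}\elliptictheta{cA_k/x_j}$ appearing in Proposition~\ref{warnaar-determinant} forces $n=r_2$, $A_k=aq^{r_1+k}$ and $c=1/a$, and identifies the remaining factor as $P_i(x):=x^{-i}\prod_{\ell=0}^{i-1}\elliptictheta{xq^{-\ell}}\elliptictheta{axq^{\ell}}$. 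Thus the $(i,j)$ entry is exactly $P_i(x_j)\prod_{k=i+1}^{r_2-1}\elliptictheta{A_kx_j}\elliptictheta{cA_k/x_j}$.

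Next I would check that the $P_i$ satisfy the hypotheses of Proposition~\ref{warnaar-determinant}: each $P_i$ is visibly analytic in $0<|x|<\infty$, and both the quasi-periodicity $P_i(px)=(c/x^2p)^iP_i(x)$ and the symmetry $P_i(c/x)=P_i(x)$ (with $c=1/a$) reduce factor-by-factor to the standard theta identities $\elliptictheta{pz}=-z^{-1}\elliptictheta{z}$ and $\elliptictheta{1/z}=-z^{-1}\elliptictheta{z}$. Concretely, $\elliptictheta{pxq^{-\ell}}\elliptictheta{apxq^{\ell}}=(ax^2)^{-1}\elliptictheta{xq^{-\ell}}\elliptictheta{axq^{\ell}}$ and, under $x\mapsto 1/(ax)$, the pair $\elliptictheta{xq^{-\ell}}\elliptictheta{axq^{\ell}}$ maps to $\elliptictheta{q^{-\ell}/(ax)}\elliptictheta{q^{\ell}/x}=(ax^2)^{-1}\elliptictheta{axq^{\ell}}\elliptictheta{xq^{-\ell}}$; in each case the prefactor $x^{-i}$ absorbs the leftover powers and the signs cancel in pairs. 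With the hypotheses verified, Proposition~\ref{warnaar-determinant} applies directly.

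It then remains to simplify the resulting product. Here $cA_j/x_j=q^{r_1+j-k_j}$, $\elliptictheta{x_j/x_i}=\elliptictheta{q^{k_j-k_i}}$ and $\elliptictheta{x_ix_j/c}=\elliptictheta{aq^{k_i+k_j}}$, which produces the stated product of theta functions over $0\le i<j\le r_2-1$. For the remaining factor I would compute $P_i(1/A_i)=P_i\big(q^{-r_1-i}/a\big)$ by applying the reflection $\elliptictheta{z}=-z\,\elliptictheta{1/z}$ to each of its $2i$ theta factors; this rewrites them as $\qrfac{q^{r_1+1}}{i}$ and $\qrfac{aq^{r_1+i}}{i}$ times an explicit power of $q$, giving $P_i(1/A_i)=q^{-i(r_1+i)}\qrfac{q^{r_1+1},aq^{r_1+i}}{i}$. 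Finally, multiplying $\prod_{0\le i<j\le r_2-1}(cA_j/x_j)=q^{r_1\binom{r_2}{2}+\sum_{j=0}^{r_2-1} j^2}\,q^{-\sum_{j=0}^{r_2-1} jk_j}$ by $\prod_{i=0}^{r_2-1}P_i(1/A_i)$, the $r_1\binom{r_2}{2}$- and $\sum i^2$-contributions cancel and exactly the factor $q^{-\sum_i ik_i}$ remains, which is the claimed evaluation. The only genuine work is this accounting of powers of $q$; everything else is a routine match against Proposition~\ref{warnaar-determinant}, so I expect the bookkeeping of the $q$-exponents (and confirming that all theta-reflection signs cancel) to be the main thing to get right.
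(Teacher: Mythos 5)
Your proposal is correct and is essentially the paper's own proof: both apply Warnaar's determinant lemma (Proposition~\ref{warnaar-determinant}) with $n=r_2$, $x_j=q^{k_j}$, $c=1/a$, and the same $P_i$ up to a constant multiple, then track the resulting powers of $q$. The only difference is cosmetic bookkeeping -- the paper takes $A_k=q^{-r_1-k}$ and $P_i(x)=\big(c^{1/2}/x\big)^i\qrfac{xq^{1-i},x/c}{i}$, so it must pull extra row factors out of the determinant before cancelling, whereas your choice $A_k=aq^{r_1+k}$ matches the entries on the nose.
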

\begin{proof}
We apply Proposition~\ref{warnaar-determinant} with
\begin{gather*}
P_i(x)=\big( c^{\frac{1}{2}}/{x}\big)^i \qrfac{xq^{1-i}, x/c}{i}.
\end{gather*}
The quasi-periodicity and symmetry properties of $P_i(x)$ are easy to verify using the elementary identities \cite[equation~(11.2.55)]{GR90}
\begin{gather*}
\qrfac{a}{n} =\qrfac{pa}{n}(-a)^n q^{\binom n 2}
\end{gather*}
and \cite[equation~(11.2.48)]{GR90}
\begin{gather*}
\qrfac{a}{n} =\qrfac{q^{1-n}/a}{n}(-a)^n q^{\binom n 2}.
\end{gather*}
The special case $n=r_2$, $x_j=q^{k_j}$, $A_k=q^{-r_1-k}$, $c=1/a$ of Proposition~\ref{warnaar-determinant} implies that
\begin{gather*}
\det_{0\leq i,j\leq r_2-1} \Bigg( q^{2{\binom {i+1} 2}-2{\binom {r_2} 2}-(2r_1-1)(r_2-i-1)}
\left( 1/a \right)^{i/2} \left({1}/{aq}\right)^{r_2-i-1}\\
\qquad\quad{}\times q^{-ik_j} \qrfac{q^{k_j-i+1}, aq^{k_j} }{i} \qrfac{q^{r_1-k_j+i+1}, aq^{r_1+i+k_j+1} }{r_2-i-1}\Bigg)\\
\qquad{} = \prod_{0\leq i<j \leq r_2-1} \left(q^{-r_1-j-k_j}/a\right) \elliptictheta{q^{k_j-k_i}, aq^{k_i+k_j}}\\
\qquad\quad{} \times \prod\limits_{i=0}^{r_2-1} (1/a)^{i/2} \left( {q^{-r_1-i}}\right)^i\qrfac{q^{r_1+1}, aq^{r_1+i}}{i}.
\end{gather*}
To complete the proof of the lemma, we take out the common factors from each row of the determinant, cancel common terms, and simplify.
\end{proof}

Next, we require a $C_n$ extension of Frenkel and Turaev's $_{10}V_8$ summation formula. As explained earlier, this is due to Warnaar and Rosengren, but we prefer to use a formulation due to Schlosser. In Schlosser \cite[Theorem~3.1]{MS2007a}, we take $r=r_2$, use the summation indices $k_0,k_1, \dots, k_{r_2-1}$, make the substitutions $a\mapsto s_1s_2/q$, $b\mapsto s_1$, $c\mapsto t_1$, $d\mapsto t_2$ and take $m=r_1+r_2-1$, to obtain
\begin{gather}
\sum_{0\leq k_0<\dots <k_{r_2-1}\leq r_1+r_2-1} \prod_{j=0}^{r_2-1}\frac{\qrfac{s_1s_2/q, s_1, t_1, t_2, s_1s_2^2q^{r_1-r_2}/t_1t_2,q^{-(r_1+r_2-1)} }{k_j} }{\qrfac{q, s_2, s_1s_2/t_1, s_1s_2/ t_2,
t_1t_2q^{r_2-r_1}/s_2, s_1s_2q^{r_1+r_2-1} }{k_j} }\nonumber\\
\qquad\quad{}\times \prod_{0\leq i<j \leq r_2-1} \elliptictheta{q^{k_j-k_i},s_1s_2q^{k_i+k_j-1}}^2\prod_{j=0}^{r_2-1}\frac{\elliptictheta{s_1s_2q^{2k_j-1} }}{\elliptictheta{s_1s_2/q} }
q^{\sum\limits_{i=0}^{r_2-1} (2r_2-2i-1)k_i} \nonumber\\
\qquad{} = q^{-4{\binom {r_2} 3}} \left( \frac{s_2}{t_1t_2q^2}\right)^{\binom {r_2} 2} \prod_{j=1}^{r_2}\qrfac{q, s_1, t_1, t_2, s_1s_2^2q^{r_1-r_2}/t_1t_2 }{j-1} \qrfac{q, s_1s_2}{r_1+r_2-1} \nonumber\\
\qquad\quad{} \times \prod_{j=1}^{r_2} \frac{\qrfac{ s_2q^{1-j}/t_1,s_2q^{1-j}/t_2, s_1s_2q^{1-j}/t_1t_2}{r_1}}{\qrfac{q, s_2, s_1s_2/t_1, s_1s_2/ t_2,s_2q^{1-2r_2+j}/t_1t_2 }{r_1+r_2-j} }.\label{rosengren}
 \end{gather}
These are all the ingredients required for the proof of Theorem~\ref{GB-Krat-det2}.

\begin{proof}[Proof of Theorem~\ref{GB-Krat-det2}] Let $F^{a_1,a_2,\dots, a_r}_{b_1,a_2,\dots, b_r}$ denote the submatrix of $F$ consisting of rows $a_1, \dots,a_r$ and columns $b_1,\dots, b_r$. By taking the Laplace expansion of $\det F$ with respect to the first $r_2$ rows, we find that
\begin{gather}\label{laplace}
\det F = \sum_{0\leq k_0<\dots <k_{r_2-1}\leq r_1+r_2-1} (-1)^{{\binom {r_2} 2}+\sum\limits_{i=0}^{r_2-1} k_i} \det F^{0,1,\dots, r_2-1}_{k_0,k_1,\dots, k_{r_2-1}} \det F^{r_2,r_2+1,\dots, r_1+r_2-1}_{l_0,l_1,\dots, l_{r_1-1}},
\end{gather}
where $\{ l_0,l_1,\dots, l_{r_1-1}\}$ is the complement of the set $\{k_0,k_1,\dots, k_{r_2-1}\}$ in the set $\{ 0, 1, 2, \dots$, $r_1+r_2-1\}$. Let $D_1$ and $D_2$ denote the two determinants in the sum. Then
\begin{gather*}
D_1=\det F^{0,1,\dots, r_2-1}_{k_0,k_1,\dots, k_{r_2-1}} =\det \left( f_{i,k_j}(s_1, s_2, t_1, t_2, r_1, r_2)\right)
\end{gather*}
and
\begin{gather*}
D_2=\det F^{r_2,r_2+1,\dots, r_1+r_2-1}_{l_0,l_1,\dots, l_{r_1-1}} = \det \left( f_{i,l_j}(s_2, s_1, s_1s_2/t_1, s_1s_2/t_2, r_2, r_1)\right),
\end{gather*}
where the $f_{ij}$'s are as in \eqref{f}. After evaluating these determinants using Lemma~\ref{det-lemma}, the resulting sum can be evaluated using the elliptic $C_n$ summation theorem given in~\eqref{rosengren}. The result follows after performing a large amount of simplification. Here are some more details.

We first evaluate $D_1=\det ( f_{i,k_j})$, where we suppress the dependence on other parameters for the time being. We first take out many factors out of the rows (indexed by~$i$) and columns (indexed by~$j$), with the goal of eliminating all denominators and reducing the number of factors in each entry of the determinant as much as possible. We have
\begin{gather*}
D_1= \det \left( f_{i,k_j}\right) = q^{\sum\limits_{j=0}^{r_2-1} {\binom {k_j} 2}+r_2\sum\limits_{j=0}^{r_2-1}k_j- \frac{1}{3}(2r_2-1){\binom {r_2} 2}}\prod\limits_{i=0}^{r_2-1}
 \frac{\qrfac{q}{r_1}\qrfac{s_1s_2q^{r_1}}{2i} }{\qrfac{s_1,t_1,t_2,s_1s_2^2q^{r_1-r_2}/t_1t_2}{i}} \\
\hphantom{D_1=}{} \times\prod\limits_{j=0}^{r_2-1} \frac{\qrfac{s_1,t_1,t_2,s_1s_2^2q^{r_1-r_2}/t_1t_2}{k_j}}{\qrfac{q, s_1s_2q^{k_j-1}}{k_j} \qrfac{q}{r_1+r_2-k_j-1} \qrfac{s_1s_2q^{r_1}}{r_2+k_j-1}}\\
\hphantom{D_1=}{} \times \det_{0\leq i,j\leq r_2-1}\Bigg( q^{-ik_j} \qrfac{q^{k_j-i+1}, s_1s_2q^{k_j-1} }{i} \qrfac{q^{r_1-k_j+i+1}, s_1s_2q^{r_1+k_j+i} }{r_2-i-1}\Bigg)\\
\hphantom{D_1}{}=q^{\sum\limits_{j=0}^{r_2-1} {\binom {k_j} 2}+r_2\sum\limits_{j=0}^{r_2-1}k_j -\frac{1}{3}(2r_2-1){\binom {r_2} 2}-\sum\limits_{i=0}^{r_2-1}ik_i}
\prod\limits_{i=0}^{r_2-1} \frac{\qrfac{q}{r_1}\qrfac{q^{r_1+1}, s_1s_2q^{r_1+i-1}}{i} \qrfac{s_1s_2q^{r_1}}{2i} }{\qrfac{s_1,t_1,t_2,s_1s_2^2q^{r_1-r_2}/t_1t_2}{i}} \\
\hphantom{D_1=}{} \times\prod\limits_{j=0}^{r_2-1} \frac{\qrfac{s_1,t_1,t_2,s_1s_2^2q^{r_1-r_2}/t_1t_2}{k_j}}{\qrfac{q, s_1s_2q^{k_j-1}}{k_j} \qrfac{q}{r_1+r_2-k_j-1} \qrfac{s_1s_2q^{r_1}}{r_2+k_j-1}}\\
\hphantom{D_1=}{} \times \prod_{0\leq i<j \leq r_2-1} \elliptictheta{q^{k_j-k_i}, s_1s_2q^{k_i+k_j-1}},
\end{gather*}
where the determinant evaluation in the last line is from Lemma~\ref{det-lemma}, with $a\mapsto s_1s_2/q$.

The second determinant
\begin{gather*}
D_2=\det \left( f_{i,l_j}(s_2, s_1, s_1s_2/t_1, s_1s_2/t_2, r_2, r_1)\right)
 \end{gather*}
is obtained from the above by replacing $k_j$ by $l_j$, and simultaneously replacing
\begin{gather*}
(s_1,s_2, t_1, t_2, r_1, r_2) \qquad \text{by} \quad (s_2, s_1, s_1s_2/t_1, s_1s_2/t_2, r_2, r_1).
\end{gather*}
Note that the indices of summation are $k_j$ and the expression we obtain for $D_2$ is in terms of the~$l_j$'s. We use elementary algebraic manipulations to express the products in terms of the~$k_j$'s. For example, we use the \lq\lq inclusion-exclusion" formula
\begin{gather*}
\prod_{0\leq i<j \leq r_1-1} \elliptictheta{q^{l_j-l_i}} = \frac{\prod\limits_{0\leq i<j \leq r_1+r_2-1} \elliptictheta{q^{j-i}}
\prod\limits_{0\leq i<j \leq r_2-1} \elliptictheta{q^{k_j-k_i}} }
{\prod\limits_{j=0}^{r_2-1}\prod\limits_{i=0}^{k_j-1}\elliptictheta{q^{k_j-i}}
\prod\limits_{i=0}^{r_2-1}\prod\limits_{j=k_i+1}^{r_1+r_2-1}\elliptictheta{q^{j-k_i}} } \\
 \hphantom{\prod_{0\leq i<j \leq r_1-1} \elliptictheta{q^{l_j-l_i}}}{} = \frac
{\prod\limits_{i=0}^{r_1+r_2-1}\qrfac{q}{i} \prod\limits_{0\leq i<j \leq r_2-1} \elliptictheta{q^{k_j-k_i}}}
{\prod\limits_{j=0}^{r_2-1} \qrfac{q}{k_j}\prod\limits_{i=0}^{r_2-1}\qrfac{q}{r_1+r_2-k_i-1}};
\end{gather*}
and, using the same idea,
\begin{gather*}
q^{-\sum\limits_{i=0}^{r_1-1}i l_i} = \prod_{0\leq i<j \leq r_1-1} q^{-l_j} = q^{-\sum\limits_{i=0}^{r_2-1}i k_i +\sum\limits_{i=0}^{r_2-1} {\binom {k_i} 2}-(r_1-1){\binom {r_1+r_2} 2}+\sum\limits_{i=1}^{r_1+r_2-1}{\binom i 2} }.
\end{gather*}
After some further algebraic simplification, we obtain
\begin{gather*}
D_2 = q^{\frac{1}{3}(2r_1+2r_2-1) {\binom {r_1+r_2} 2} -r_1\sum\limits_{j=0}^{r_2-1}k_j -\frac{1}{3}(2r_1-1){\binom {r_1} 2}-\sum\limits_{i=0}^{r_2-1}ik_i}
\prod_{0\leq i<j \leq r_2-1} \elliptictheta{q^{k_j-k_i}, s_1s_2q^{k_i+k_j-1}} \\
\hphantom{D_2 =}{} \times\prod\limits_{j=0}^{r_1+r_2-1}\frac{\qrfac{s_2, s_1s_2/t_1, s_1s_2/t_2, t_1t_2q^{r_2-r_1}/s_2}{j}}{\qrfac{s_1s_2q^{r_2}}{r_1+j-1} \qrfac{q}{r_1+r_2-j-1} }\\
\hphantom{D_2 =}{} \times \prod\limits_{j=0}^{r_1-1}
\frac{\qrfac{q}{r_2} \qrfac{q^{r_2+1},s_1s_2q^{r_2+j-1}}{j}\qrfac{s_1s_2q^{r_2}}{2j}}{\qrfac{s_2, s_1s_2/t_1, s_1s_2/t_2, t_1t_2q^{r_2-r_1}/s_2}{j}}\\
\hphantom{D_2 =}{} \times \prod\limits_{j=0}^{r_2-1}\frac{\qrfac{s_1s_2q^{r_2}}{r_1+k_j-1}}{\qrfac{s_2, s_1s_2/t_1, s_1s_2/t_2, t_1t_2q^{r_2-r_1}/s_2}{k_j}\qrfac{s_1s_2q^{2k_j}}{r_1+r_2-k_j-1}}.
\end{gather*}
Next we substitute the above expressions for $D_1$ and $D_2$ in~\eqref{laplace} to obtain, after some algebraic manipulation, the multiple sum
\begin{gather*}
\det F = (-1)^{\binom {r_2} 2} q^{r_1r_2(r_1+r_2-1)} \prod\limits_{j=0}^{r_1+r_2-1}\frac{\qrfac{s_2, s_1s_2/t_1, s_1s_2/t_2, t_1t_2q^{r_2-r_1}/s_2}{j}}{\qrfac{s_1s_2q^{r_2}}{r_1+j-1} \qrfac{q}{r_1+r_2-j-1}}\\
\hphantom{\det F =}{} \times \prod\limits_{j=0}^{r_1-1}\frac{\qrfac{q}{r_2}\qrfac{q^{r_2+1},s_1s_2q^{r_2+j-1}}{j}\qrfac{s_1s_2q^{r_2}}{2j}}{\qrfac{s_2, s_1s_2/t_1, s_1s_2/t_2, t_1t_2q^{r_2-r_1}/s_2}{j}}\\
\hphantom{\det F =}{} \times
\prod\limits_{j=0}^{r_2-1} \frac{\qrfac{q}{r_1}\qrfac{q^{r_1+1},s_1s_2q^{r_1+j-1}}{j} \qrfac{s_1s_2q^{r_1}}{2j}\qrfac{s_1s_2}{r_1} }{\qrfac{s_1, t_1, t_2, s_1s_2^2q^{r_1-r_2}/t_1t_2}{j}
\qrfac{s_1s_2}{r_2} \qrfac{s_1s_2}{r_1+r_2-1} \qrfac{q}{r_1+r_2-1}}\\
\hphantom{\det F =}{} \times \!\!\sum_{0\leq k_0<\dots <k_{r_2-1}\leq r_1+r_2-1} \!\! \Bigg(\prod_{j=0}^{r_2-1}\frac{\qrfac{s_1s_2/q, s_1, t_1, t_2, s_1s_2^2q^{r_1-r_2}/t_1t_2,
q^{-(r_1+r_2-1)} }{k_j} }{\qrfac{q, s_2, s_1s_2/t_1, s_1s_2/ t_2,t_1t_2q^{r_2-r_1}/s_2, s_1s_2q^{r_1+r_2-1} }{k_j} }\\
\hphantom{\det F =}{} \times
\prod_{0\leq i<j \leq r_2-1} \left(\elliptictheta{q^{k_j-k_i},s_1s_2q^{k_i+k_j-1}}\right)^2\prod_{j=0}^{r_2-1}\frac{\elliptictheta{s_1s_2q^{2k_j-1} }}{\elliptictheta{s_1s_2/q} }
q^{\sum\limits_{i=0}^{r_2-1} (2r_2-2i-1)k_i} \Bigg).
\end{gather*}
This sum can be evaluated using \eqref{rosengren}. After replacing the multiple sum by the corresponding products, we again require a large number of algebraic simplifications. For example, we use
\begin{gather*}
\prod_{i=1}^{r_2} \qrfac{s_2q^{1-i}/t_1}{r_1} = \prod_{j=1}^{r_1}\qrfac{s_2q^{-r_2+j}/t_1}{r_2},\\
\prod_{j=0}^{r_1-1} \qrfac{s_1s_2q^{r_2+j-1}}{j} =\prod_{i=0}^{r_1-1} \qrfac{s_1s_2q^{r_2+2i}}{r_1-i-1},\\
\prod_{i=0}^{r_2-1} \frac{\qrfac{s_1s_2}{r_1} \qrfac{s_1s_2q^{r_1}}{2i} \qrfac{s_1s_2q^{r_1+i-1}}{i} }{\qrfac{s_1s_2}{r_1+r_2+i-1} } =1,
\end{gather*}
and other elementary identities. The result then condenses to the right hand side of \eqref{GB-Krat-det2-eqn}.
\end{proof}

\section{A transformation formula for elliptic determinants}\label{th2}
In this section, we derive a transformation formula between two determinants that extends Theorem~\ref{GB-Krat-det2}, by adding two further parameters. It is apparent from our proof in Section~\ref{proof-th1} that the matrices we consider are closely linked with very-well-poised elliptic hypergeometric series (see~\cite{GR90} for the terminology, if required). We make this connection transparent by re-labelling the parameters.

The Sylvesteresque matrices which we study now are defined as follows. Let $a$, $b$, $c$, $d$, $e$, $f$ be arbitrary complex numbers, $r_1$, $r_2$, $i$, and $j$ be non-negative integers, and let
\begin{gather}
g_{ij}(a; b,c,d,e,f; r_1, r_2) := q^{{\binom {j- i} 2}+r_2(j-i)} \frac{\qrfac{q}{r_1}}{\qrfac{q}{r_1-j+i}}\nonumber\\
\hphantom{g_{ij}(a; b,c,d,e,f; r_1, r_2) :=}{} \times
 \frac{\qrfac{bq^i, cq^i, dq^i, eq^i, fq^i, a^3 q^{r_1-r_2+i+3}/bcdef}{j-i}} {\qrfac{q, aq^{i+j}, a q^{r_1+2i+1}}{j-i}}.\label{g}
\end{gather}
We consider the $ (r_1+r_2) \times (r_1+r_2)$ matrices $G=(g^\prime_{ij})$ and $H = (h^\prime_{ij})$, where
\begin{gather*}
g^\prime_{ij} =
\begin{cases}
g_{ij}(a; b,c,d,e,f; r_1, r_2), & \text{for } 0\leq i \leq r_2-1,\\
g_{i-r_2,j}(a; aq/b,aq/c,aq/d,aq/e,aq/f; r_2, r_1), & \text{for } r_2\leq i \leq r_1+r_2-1 ,
\end{cases}
\end{gather*}
and
\begin{gather*}
h^\prime_{ij} = \begin{cases}
g_{ij}(\lambda; \lambda b/a, \lambda c/a, \lambda d/a, e, f; r_1, r_2), & \text{for } 0\leq i \leq r_2-1,\\
g_{i-r_2,j}(\lambda; aq/b,aq/c,aq/d,\lambda q/e,\lambda q/f; r_2, r_1), & \text{for } r_2\leq i \leq r_1+r_2-1 .
\end{cases}
\end{gather*}

\begin{Theorem}\label{GB-Krat-det2-transformation} With $G$ and $H$ as defined above, and $\lambda = a^2q^{2-r_2}/bcd$, we have
\begin{gather*}
\det{G} = \left( \frac{a}{\lambda}\right)^{r_1r_2} \prod_{j=1}^{r_1} \frac{\qrfac{\lambda q^{r_1+r_2+j-1}}{r_2}}{\qrfac{aq^{r_1+r_2+j-1}}{r_2}} \det{H}.
\end{gather*}
\end{Theorem}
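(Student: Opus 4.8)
The plan is to mimic the proof of Theorem~\ref{GB-Krat-det2} given in Section~\ref{proof-th1}, but with the $C_n$ $_{10}V_9$ summation~\eqref{rosengren} replaced by the $C_n$ $_{12}V_{11}$ transformation formula of Warnaar/Rains/Coskun--Gustafson. First I would take the Laplace expansion of $\det G$ along its first $r_2$ rows, exactly as in~\eqref{laplace}, writing $\det G$ as a sum over $r_2$-subsets $\{k_0<\dots<k_{r_2-1}\}$ of $\{0,1,\dots,r_1+r_2-1\}$ of a sign times a product $D_1\cdot D_2$ of two complementary minors. Here $D_1=\det\bigl(g_{i,k_j}(a;b,c,d,e,f;r_1,r_2)\bigr)$ is an $r_2\times r_2$ determinant, and $D_2=\det\bigl(g_{i,l_j}(a;aq/b,aq/c,aq/d,aq/e,aq/f;r_2,r_1)\bigr)$ is an $r_1\times r_1$ determinant indexed by the complementary set $\{l_0,\dots,l_{r_1-1}\}$.

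Next I would evaluate $D_1$ and $D_2$ by pulling factors out of rows and columns so that the remaining determinant is exactly the one evaluated in Lemma~\ref{det-lemma} (with $a\mapsto a$ there, i.e.\ applying it to $\qrfac{q^{k_j-i+1},aq^{k_j}}{i}\qrfac{q^{r_1-k_j+i+1},aq^{r_1+k_j+i+1}}{r_2-i-1}$ up to the prefactor $q^{-ik_j}$). For $D_2$ one does the analogous reduction with the roles of $r_1,r_2$ swapped and the complementary index set $\{l_j\}$; the passage from the $l_j$'s back to the $k_j$'s uses the inclusion--exclusion identities for $\prod\elliptictheta{q^{l_j-l_i}}$ and for $q^{-\sum i l_i}$ displayed in Section~\ref{proof-th1}, which carry over verbatim since they involve only the index sets. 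Substituting these evaluations into the Laplace expansion and collecting the $k$-dependent pieces, $\det G$ becomes (a closed prefactor in $a,b,c,d,e,f,q,r_1,r_2$) times a multiple sum which, after the same bookkeeping as in the proof of Theorem~\ref{GB-Krat-det2}, is a very-well-poised $C_n$ elliptic sum of $_{12}V_{11}$ type in the summation variables $k_0,\dots,k_{r_2-1}$, with the extra parameters $e,f$ now genuinely present (rather than sent to $\infty$).

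Then I would apply the $C_n$ $_{12}V_{11}$ transformation formula (Warnaar's Conjecture~6.1 with $x=q$, proved by Rains and by Coskun--Gustafson), which rewrites this multiple sum as a ratio of products times \emph{another} $C_n$ $_{12}V_{11}$ sum whose parameters are the ``dual'' ones; under the substitutions that produced our sum, the dual parameters are precisely those appearing in the definition of $H$, with $\lambda=a^2q^{2-r_2}/bcd$ emerging as the image of the standard $_{12}V_{11}$ balancing parameter. Running the computation of $\det H$ in reverse (Laplace expansion, Lemma~\ref{det-lemma} for its minors, reassembly of the multiple sum) shows that this new $C_n$ $_{12}V_{11}$ sum, together with its prefactor, reconstitutes $\det H$ up to exactly the explicit scalar factor $\left(a/\lambda\right)^{r_1r_2}\prod_{j=1}^{r_1}\qrfac{\lambda q^{r_1+r_2+j-1}}{r_2}/\qrfac{aq^{r_1+r_2+j-1}}{r_2}$ claimed in the statement. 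Equating the two expressions for the common multiple sum yields the theorem.

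The main obstacle is the bookkeeping: matching the prefactor coming from the $C_n$ $_{12}V_{11}$ transformation against the prefactor in the analogous Laplace expansion of $\det H$, so that everything collapses to the clean scalar in the statement. This requires a large number of elliptic shifted-factorial identities of the type $\prod_{i=1}^{r_2}\qrfac{s q^{1-i}}{r_1}=\prod_{j=1}^{r_1}\qrfac{s q^{-r_2+j}}{r_2}$ and telescoping products like $\prod_{i=0}^{r_2-1}\qrfac{ab}{r_1}\qrfac{abq^{r_1}}{2i}\qrfac{abq^{r_1+i-1}}{i}/\qrfac{ab}{r_1+r_2+i-1}=1$, together with careful tracking of powers of $q$ and signs; conceptually it is routine, but it is where essentially all the work lies. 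One should also verify that $\lambda=a^2q^{2-r_2}/bcd$ is the correct specialization making the $_{12}V_{11}$ applicable (i.e.\ that the very-well-poised and balancing conditions hold after our substitutions) and that the termination condition $q^{-(r_1+r_2-1)}$ among the numerator parameters is preserved on the dual side.
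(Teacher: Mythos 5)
Your proposal follows essentially the same route as the paper's own sketch: Laplace expansion of $\det G$ along the first $r_2$ rows, evaluation of the two complementary minors via Lemma~\ref{det-lemma} (with the inclusion--exclusion passage from the $l_j$'s to the $k_j$'s), application of the $C_n$ ${}_{12}V_{11}$ transformation~\eqref{RCG} with $\lambda=a^2q^{2-r_2}/bcd$, and a reversal of these steps to reassemble $\det H$ together with the explicit scalar factor. The approach and the identified bookkeeping obstacles match the paper's argument, so the proposal is correct.
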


\begin{rem}We reiterate that we have suppressed the $q$, $p$ in our notation.
\end{rem}

\begin{proof}[Sketch of proof] The first few steps in the proof are similar to those of Theorem~\ref{GB-Krat-det2}. We begin with the Laplace expansion of $\det G$ with respect to the first $r_2$ rows to obtain an expression of the form
\begin{gather*}
\det G = \sum_{0\leq k_0<\dots <k_{r_2-1}\leq r_1+r_2-1} (-1)^{{\binom {r_2} 2}+\sum\limits_{i=0}^{r_2-1} k_i} D_1 \cdot D_2,
\end{gather*}
where $D_1$ and $D_2$ are the determinants
 \begin{gather*}
 D_1=\det \left( g_{i,k_j}(a; b,c,d,e,f; r_1, r_2)\right)
 \end{gather*}
and
\begin{gather*}
D_2=\det \left( g_{i,l_j}(a; aq/b,aq/c,aq/d,aq/e,aq/f; r_2, r_1)\right).
\end{gather*}
Here the $g_{ij}$'s are as in \eqref{g} and the $l_j$'s have the same meaning as in the proof of Theorem~\ref{GB-Krat-det2}.

Again, both determinants $D_1$ and $D_2$ can be evaluated using Lemma~\ref{det-lemma}, after taking out common factors from rows and columns of each determinant; and again, the expression for $D_2$ is in terms of the $l_j$'s and we have to write it in terms of the indices of summation $k_j$. After some algebraic manipulations, just as done earlier, we arrive at a $C_n$ multiple sum. This multiple sum can be transformed into a~multiple of another multiple sum using the $C_n$ transformation formula \eqref{RCG} below.

Next, we perform these steps in reverse. Using Lemma~\ref{det-lemma}, and some algebraic simplification, we write the summand of the resulting sum in the form
 \begin{gather*}
(\star\star\star) \sum_{0\leq k_0<\dots <k_{r_2-1}\leq r_1+r_2-1} (-1)^{{\binom {r_2} 2}+\sum\limits_{i=0}^{r_2-1} k_i} D_3 \cdot D_4,
\end{gather*}
where $(\star\star\star)$ are some explicit products, and $D_3$ and $D_4$ are the determinants
\begin{gather*}
D_3=\det \left( g_{i,k_j}(\lambda; \lambda b/a, \lambda c/a, \lambda d/a, e, f; r_1, r_2)\right)
 \end{gather*}
and
\begin{gather*}
D_4=\det \left( g_{i,l_j}(\lambda; aq/b,aq/c,aq/d,\lambda q/e,\lambda q/f; r_2, r_1)\right).
 \end{gather*}
This is the Laplace expansion of $\det{H}$ with respect to the first $r_2$ rows, multiplied by some products.

These products simplify considerably and we obtain the right hand side of Theorem~\ref{GB-Krat-det2-transformation}.
\end{proof}

The $C_n$ transformation formula we require for our proof is a formula due to Warnaar, Rains, and Coskun and Gustafson, as explained earlier. We use a formulation presented in the second author's paper with Schlosser~\cite[Theorem~2]{KS2014}, where we use the summation indices $k_0,k_1, \dots, k_{r_2-1}$, take $r=r_2$ and $m=r_1+r_2-1$, in order to write it in the form
\begin{gather}
 \sum_{0\leq k_0<\dots <k_{r_2-1}\leq r_1+r_2-1} \Bigg( q^{\sum\limits_{i=0}^{r_2-1} (2r_2-2i-1)k_i}\prod_{0\leq i<j \leq r_2-1}\!\! \elliptictheta{q^{k_j-k_i}, aq^{k_i+k_j}}^2 \nonumber\\
\qquad\quad{} \times \prod_{j=0}^{r_2-1}\frac{\elliptictheta{aq^{2k_j} }\qrfac{a, b, c, d, e, f, \lambda aq^{r_1+1}/ef,q^{-(r_1+r_2-1)} }{k_j} }{\elliptictheta{a} \qrfac{q, aq/b, aq/c, aq/d, aq/e, aq/f,
 efq^{-r_1}/\lambda , aq^{r_1+r_2} }{k_j} } \Bigg)\nonumber\\
\qquad {} = \prod_{j=1}^{r_2}\frac{\qrfac{b,c,d, ef/a}{j-1}\qrfac{a q}{r_1+r_2-1} \qrfac{ aq/ef}{r_1} \qrfac{\lambda q/e, \lambda q/f}{r_1+r_2-j} }
{\qrfac{\lambda b/a, \lambda c/a, \lambda d/a, ef/\lambda}{j-1} \qrfac{\lambda q}{r_1+r_2-1} \qrfac{ \lambda q/ef}{r_1} \qrfac{a q/e, a q/f}{r_1+r_2-j} }\nonumber\\
\qquad\quad \times \sum_{0\leq k_0<\dots <k_{r_2-1}\leq r_1+r_2-1} \Bigg( q^{\sum\limits_{i=0}^{r_2-1} (2r_2-2i-1)k_i}\prod_{0\leq i<j \leq r_2-1} \elliptictheta{q^{k_j-k_i}, \lambda
 q^{k_i+k_j}}^2 \nonumber\\
\qquad\quad{} \times \prod_{j=0}^{r_2-1} \frac{\elliptictheta{\lambda q^{2k_j} }\qrfac{\lambda, \lambda b/a,\lambda c/a, \lambda d/a, e, f, \lambda aq^{r_1+1}/ef, q^{-(r_1+r_2-1)} }{k_j} }
{\elliptictheta{\lambda} \qrfac{q, aq/b, aq/c, aq/d, \lambda q/e, \lambda q/f, efq^{-r_1}/a,\lambda q^{r_1+r_2} }{k_j} } \Bigg)
 , \label{RCG}
\end{gather}
where $\lambda = a^2q^{2-r_2}/bcd$.

\section{How Theorem~\ref{GB-Krat-det2-transformation} extends Theorem~\ref{GB-Krat-det2}}\label{2reductionto1}
It is apparent that the transformation formula in Theorem~\ref{GB-Krat-det2-transformation} contains two additional para\-me\-ters than the determinant evaluation in Theorem~\ref{GB-Krat-det2}, and is thus formally an extension of Theorem~\ref{GB-Krat-det2}. By examining the proofs of the two theorems, this fact is confirmed. In this section, we show how to obtain the determinant evaluation in \eqref{det2-form2} directly from the transformation formula in Theorem~\ref{GB-Krat-det2-transformation}, by using elementary determinant operations.

We take $d=aq/c$ in Theorem~\ref{GB-Krat-det2-transformation}. Let $G^\prime=(g^{\prime\prime}_{ij})$ be the resulting matrix. So
\begin{gather*}
g^{\prime\prime}_{ij} =
\begin{cases}
g_{ij}(a; b,c,aq/c,e,f; r_1, r_2), & \text{for } 0\leq i \leq r_2-1,\\
g_{i-r_2,j}(a; aq/b,aq/c,c,aq/e,aq/f; r_2, r_1), & \text{for } r_2\leq i \leq r_1+r_2-1 ,
\end{cases}
\end{gather*}
where the $g_{ij}$'s are as defined in \eqref{g}. Further, let $F^\prime=(f^{\prime\prime}_{ij})$ be the matrix whose entries are given by
\begin{gather*}
f^{\prime\prime}_{ij} =
\begin{cases}
f_{ij}(b, aq/b, e, f, r_1, r_2), & \text{for } 0\leq i \leq r_2-1,\\
f_{i-r_2,j}(aq/b, b, aq/e, aq/f, r_2, r_1), & \text{for } r_2\leq i \leq r_1+r_2-1 ,
\end{cases}
\end{gather*}
where the $f_{ij}$'s are as in \eqref{f}.

We first observe that
\begin{gather}\label{trans-special-lhs}
\det G^\prime =\prod\limits_{i=0}^{r_1-1}\frac{ \qrfac{c, aq/c}{r_2+i}}{ \qrfac{c, aq/c}{i}}\cdot \det F^\prime.
\end{gather}
To see this, we multiply the $i$th row of $\det G^\prime$ by
\begin{gather*}
\begin{cases}
\qrfac{c, aq/c}{i}, & \text{for $0\leq i \leq r_2-1$,} \\
\qrfac{c,aq/c}{i-r_2}, & \text{for $r_2\leq i \leq r_1+r_2-1$,}
\end{cases}
\end{gather*}
and, to compensate, divide $\det G^\prime$ by
\begin{gather*}
\prod\limits_{i=0}^{r_2-1} \qrfac{c, aq/c}{i} \prod\limits_{i=r_2}^{r_1+r_2-1} \qrfac{c, aq/c}{i-r_2}.
\end{gather*}
In this manner we get a determinant equivalent to $\det G^\prime$. But in the resulting determinant, column $j$ has the common factor
\begin{gather*}
\qrfac{c, aq/c }{j}
\end{gather*}
that we can take out from each column, to obtain
\begin{gather*}
\frac{\prod\limits_{j=0}^{r_1+r_2-1} \qrfac{c, aq/c}{j}}{\prod\limits_{i=0}^{r_2-1} \qrfac{c, aq/c}{i} \prod\limits_{i=0}^{r_1-1} \qrfac{c, aq/c}{i}}\det F^\prime.
\end{gather*}
After some cancellation, we obtain \eqref{trans-special-lhs}.

Next, we consider $\det H$ when $d=aq/c$. Again, let $H^\prime = (h^{\prime\prime}_{ij})$, where $h^{\prime\prime}$ is obtained from $h^{\prime}$ by replacing $d$ by $aq/c$. Note that under this substitution, $\lambda = aq^{1-r_2}/b$ and
\begin{gather*}
\lambda b /a = q^{1-r_2}.
\end{gather*}
Thus, in the first $r_2$ rows of $H^\prime$ (i.e., for $0\leq i\leq r_2-1$), $h^{\prime\prime}_{ij}$ contains the factor $\qrfac{q^{1-r_2+i}}{j-i}$. But
\begin{gather*}
\qrfac{q^{1-r_2+i}}{j-i} = \big( q^{-(r_2-i-1)}\big)_{j-i} = 0,\qquad \text{for $j-i> r_2-i-1$},
\end{gather*}
so
\begin{gather*}
h^{\prime\prime}_{ij}= 0, \qquad \text{for } j>r_2-1.
\end{gather*}
This shows that in the first $r_2$ rows of $H^\prime$, the entries in all the columns after the first $r_2$ columns are $0$.

Similarly, for the next $r_1$ rows we find that due to the presence of the factor
\begin{gather*}
\frac{1}{\qrfac{q}{(r_2-j)+(i-r_2)}} = \frac{1}{\qrfac{q}{i-j}}
\end{gather*}
in $h^\prime_{ij}$, we have
\begin{gather*}
h^{\prime\prime}_{ij}= 0, \qquad \text{for } j>i,
\end{gather*}
for $r_2\leq i \leq r_1+r_2-1$. Thus the matrix $H^{\prime}$ is of the form
\begin{gather*}
\left(
\begin{array}{cccc|cccc}
1 & * & * & * & 0 & \hdotsfor 2 & 0\cr
0 & 1 & * & * & 0 & \hdotsfor 2 & 0\cr
\vdots & \vdots & 1 & * & 0 & \hdotsfor 2 & 0\cr
0 & 0 & \dots &1 & 0 & \hdotsfor 2 & 0\cr
\hline
*& * & * & * & * & 0& \dots & 0\cr
0 & * & * & * & * & *& 0 & 0 \cr
\vdots & \vdots & \vdots & \vdots & \vdots & \vdots & * & 0 \cr
0 & 0& \dots & * & \hdotsfor 3 & * \cr
\end{array}
\right).
\end{gather*}
Here the top-left block is an $r_2\times r_2$ upper-triangular matrix, and the bottom right block is an $r_1 \times r_1$ lower-triangular matrix. The determinant of $H^{\prime}$ is given by the product of the diagonal entries. For the first $r_2$ rows, the diagonal entry is $1$. To compute the diagonal entries when $r_2\leq i \leq r_1+r_2-1$, note that
\begin{gather*}
h^{\prime\prime}_{ij} =
 q^{{\binom {j-i+r_2} 2}+r_1(j-i+r_2)} \frac{\qrfac{q}{r_2}}{\qrfac{q}{i-j}}\\
\hphantom{h^{\prime\prime}_{ij} =}{} \times
 \frac{\qrfac{aq^{i-r_2+1}/b, cq^{i-r_2}, aq^{i-r_2+1}/c, \lambda q^{i-r_2+1}/e, \lambda q^{i-r_2+1}/f,
 \lambda b e f q^{-r_1+i-1}/a^2}{j-i+r_2}} {\qrfac{q, \lambda q^{i+j-r_2}, \lambda q^{2i-r_2+1}}{j-i+r_2}}.
\end{gather*}
An expression for $\det H^\prime$ is given by the product of the diagonal entries
\begin{gather*}
\det H^\prime = \prod_{i=r_2}^{r_1+r_2-1} h^{\prime\prime}_{ii}
\end{gather*}
and this gives, after some simplification,
\begin{gather*}
\det H^\prime = q^{r_1^2r_2+r_1{\binom {r_2} 2}} \prod_{i=0}^{r_1-1} 	\frac{\qrfac{c,aq/c}{r_2+i}} {\qrfac{c,aq/c}{i} \qrfac{\lambda q^{r_1+r_2+i}}{r_2} }\\
\hphantom{\det H^\prime =}{} \times
\prod_{i=1}^{r_1} {\qrfac{aq^{-r_2+i+1}/be, aq^{-r_2+i+1}/bf, efq^{-r_1+i-1}/a}{r_2}},
\end{gather*}
where $\lambda = aq^{1-r_2}/b$. From here, we see that, when $d=aq/c$, the right hand side of Theorem~\ref{GB-Krat-det2-transformation} reduces to
\begin{gather}
\left( \frac{a}{\lambda}\right)^{r_1r_2}\prod_{j=1}^{r_1} \frac{\qrfac{\lambda q^{r_1+r_2+j-1}}{r_2}}{\qrfac{aq^{r_1+r_2+j-1}}{r_2}} \det H^\prime =
 b^{r_1r_2} q^{r_1r_2(r_1+r_2-1)+r_1{\binom {r_2} 2}} \nonumber\\
\qquad{} \times \prod_{i=0}^{r_1-1}	\frac{\qrfac{c,aq/c}{r_2+i}}{\qrfac{c,aq/c}{i} }\prod_{i=1}^{r_1}\frac{\qrfac{aq^{-r_2+i+1}/be, aq^{-r_2+i+1}/bf, efq^{-r_1+i-1}/a}{r_2}}
	{\qrfac{a q^{r_1+r_2+i-1}}{r_2} }.\label{trans-special-rhs}
\end{gather}

Now comparing \eqref{trans-special-lhs} and \eqref{trans-special-rhs}, and simultaneously replacing
 \begin{gather*}
 (a, b, e, f) \text{ by } (s_1s_2/q, s_1, t_1, t_2),
 \end{gather*}
 we obtain Theorem~\ref{GB-Krat-det2} in the form \eqref{det2-form2}.

\subsection*{Acknowledgements} We thank Michael Schlosser for helpful discussions. We also thank the referees for many useful suggestions. Research of the first author was supported by a grant of the Austrian Science Fund (FWF), START grant~Y463. Research of the second author was partially supported by the Austrian Science Fund (FWF), grant~F50-N15, in the framework of the Special Research Program ``Algorithmic and Enumerative Combinatorics''.

\pdfbookmark[1]{References}{ref}
\LastPageEnding


\begin{thebibliography}{99}
\footnotesize\itemsep=0pt

\bibitem{CG2006}
Coskun H., Gustafson R.A., Well-poised {M}acdonald functions {$W_\lambda$} and
 {J}ackson coefficients~{$\omega_\lambda$} on {$BC_n$}, in Jack,
 {H}all--{L}ittlewood and {M}acdonald Polynomials, \href{https://doi.org/10.1090/conm/417/07919}{\textit{Contemp. Math.}},
 Vol. 417, Amer. Math. Soc., Providence, RI, 2006, 127--155,
 \href{https://arxiv.org/abs/math.CO/0412153}{math.CO/0412153}.

\bibitem{FKX2017}
Feng H., Krattenthaler C., Xu Y., Best polynomial approximation on the
 triangle, \href{https://arxiv.org/abs/1711.04756}{arXiv:1711.04756}.

\bibitem{FT1997}
Frenkel I.B., Turaev V.G., Elliptic solutions of the {Y}ang--{B}axter equation
 and modular hypergeometric functions, in The {A}rnold--{G}elfand Mathematical
 Seminars, \href{https://doi.org/10.1007/978-1-4612-4122-5_9}{Birkh\"auser Boston}, Boston, MA, 1997, 171--204.

\bibitem{GR90}
Gasper G., Rahman M., Basic hypergeometric series, \href{https://doi.org/10.1017/CBO9780511526251}{\textit{Encyclopedia of
 Mathematics and its Applications}}, Vol.~96, 2nd~ed., Cambridge University Press, Cambridge, 2004.

\bibitem{Krat1999}
Krattenthaler C., Advanced determinant calculus, \textit{S\'em. Lothar.
 Combin.} \textbf{42} (1999), Art.~B42q, 67~pages, \href{https://arxiv.org/abs/math.CO/9902004}{math.CO/9902004}.

\bibitem{KS2014}
Krattenthaler C., Schlosser M.J., The major index generating function of
 standard {Y}oung tableaux of shapes of the form ``staircase minus
 rectangle'', in Ramanujan~125, \href{https://doi.org/10.1090/conm/627/12536}{\textit{Contemp. Math.}}, Vol.~627, Amer. Math.
 Soc., Providence, RI, 2014, 111--122, \href{https://arxiv.org/abs/1402.4538}{arXiv:1402.4538}.

\bibitem{Loos1983}
Loos R., Computing in algebraic extensions, in Computer Algebra, Editors
 B.~Buchberger, G.E. Collins, R.~Loos, R.~Albrecht, \href{https://doi.org/10.1007/978-3-7091-7551-4_12}{Springer}, Vienna, 1983,
 173--187.

\bibitem{Rains2006}
Rains E.M., {$BC_n$}-symmetric {A}belian functions, \href{https://doi.org/10.1215/S0012-7094-06-13513-5}{\textit{Duke Math.~J.}}
 \textbf{135} (2006), 99--180, \href{https://arxiv.org/abs/math.CO/0402113}{math.CO/0402113}.

\bibitem{HR2001}
Rosengren H., A proof of a multivariable elliptic summation formula conjectured
 by {W}arnaar, in {$q$}-Series with Applications to Combinatorics, Number
 Theory, and Physics ({U}rbana, {IL}, 2000), \href{https://doi.org/10.1090/conm/291/04903}{\textit{Contemp. Math.}}, Vol.~291, Amer. Math. Soc., Providence, RI, 2001, 193--202,
 \href{https://arxiv.org/abs/math.CA/0101073}{math.CA/0101073}.

\bibitem{HR2016-lectures}
Rosengren H., Elliptic hypergeometric functions, in Lectures at OPSF-S6,
 College Park, Maryland, July 2016, \href{https://arxiv.org/abs/1608.06161}{arXiv:1608.06161}.

\bibitem{HR2018a}
Rosengren H., Determinantal elliptic {S}elberg integrals, \href{https://arxiv.org/abs/1803.05186}{arXiv:1803.05186}.

\bibitem{MS2007a}
Schlosser M., Elliptic enumeration of nonintersecting lattice paths,
 \href{https://doi.org/10.1016/j.jcta.2006.07.002}{\textit{J.~Combin. Theory Ser.~A}} \textbf{114} (2007), 505--521,
 \href{https://arxiv.org/abs/math.CO/0602260}{math.CO/0602260}.

\bibitem{SOW2002}
Warnaar S.O., Summation and transformation formulas for elliptic hypergeometric
 series, \href{https://doi.org/10.1007/s00365-002-0501-6}{\textit{Constr. Approx.}} \textbf{18} (2002), 479--502,
 \href{https://arxiv.org/abs/math.QA/0001006}{math.QA/0001006}.

\end{thebibliography}
\end{document}